\newcommand{\R}{\mathbb{R}}
\newcommand{\N}{\mathbb{N}}
\newcommand{\dd}{\mathrm{d}}
\newcommand{\kC}{\mathcal{C}}
\newcommand{\kI}{\mathcal{I}}
\newcommand{\kJ}{\mathcal{J}}
\newcommand{\kT}{\mathcal{T}}
\newcommand{\kF}{\mathcal{F}}
\newcommand{\f}{{f}}
\newcommand{\T}[1]{#1^T}
\newcommand{\abs}[1]{\left|{#1}\right|}
\newcommand{\uvoz}[1]{``#1"}
\newtheorem{theorem}{Theorem}
\newtheorem{lemma}{Lemma}
\newtheorem{corollary}[lemma]{Corollary}
\newtheorem{remark}{Remark}
\newtheorem{definition}[lemma]{Definition}
\begin{document}

\title{Godunov--like numerical fluxes for conservation laws on networks\thanks{The work of L. Vacek is supported by the Charles University, project GA UK No. 1114119. The work of V. Ku\v{c}era is supported by the Czech Science Foundation, project No. 20-01074S.}}

\author{%
{\sc
Luk\'a\v s Vacek\thanks{Corresponding author. Email: lvacek@karlin.mff.cuni.cz}} \\[2pt]
Charles University, Faculty of Mathematics and Physics\\
Sokolovsk\'{a} 83, Praha 8, 186\,75, Czech Republic\\[6pt]
{\sc and}\\[6pt]
{\sc V\'aclav Ku\v cera}\thanks{Email: kucera@karlin.mff.cuni.cz}\\[2pt]
Charles University, Faculty of Mathematics and Physics\\
Sokolovsk\'{a} 83, Praha 8, 186\,75, Czech Republic
}

\maketitle

\begin{abstract}
This paper deals with the construction of a discontinuous Galerkin scheme for the solution of Lighthill-Whitham-Richards traffic flows on networks. The focus of the paper is the construction of two new numerical fluxes at junctions, which are based on the Godunov numerical flux. We analyze the basic properties of the two Godunov-based fluxes and the resulting scheme, namely conservativity and the traffic distribution property. We prove that if the junction is not congested, the traffic flows according to predetermined preferences of the drivers. Otherwise a small traffic distribution error is present, which we interpret as either the existence of dedicated turning lanes, or factoring of human behavior into the	model. We compare our approach to that of \v Cani\'c et al. (J. Sci. Comput., 2015). Numerical experiments are provided.
\end{abstract}

\section{Introduction}
In this paper, we are concerned with the simulation of the movement of traffic on \emph{networks} of roads. We take the \emph{macroscopic} approach, where the traffic is modeled as a uniform continuum which moves through the roads. This is opposed to the microscopic approach, where each individual vehicle is modeled separately. Since the total number of vehicles is conserved, the basic mathematical model for us will be that of partial differential equations (PDEs) describing conservation laws, namely nonlinear first order hyperbolic PDEs. Specifically, we are concerned with the so-called Lighthill-Whitham-Richards (LWR) traffic flow model, where the traffic moves according to an equilibrium flow of homogeneous traffic, which is described by a so-called \emph{fundamental diagram}, cf. \cite{Greenshields35}, \cite{Lecture_notes_Jungel}, \cite{Mathematical_Framework} or \cite{vanWageningen-Kessels} for an overview. Such approaches to modelling traffic flows on a single road are more or less standard. What is considerably newer and less studied is the generalization of the LWR models to networks of roads, which can be described by an \emph{oriented graph}, where on each road we have the equations for the LWR model and we need to supply some kind of boundary condition at intersections which correspond to vertices of the graph cf. \cite{Networks}.

In our case, we use the \emph{discontinuous Galerkin} (DG) method to discretize the LWR model on each individual road and the behavior of traffic at junctions is determined by prescribing a \emph{numerical flux} at the junction. In such a case, one must take into account not only the necessity for the resulting scheme to be conservative (vehicles are not lost or formed at intersections), but also other properties, such as taking into account the preferences of individual drivers as to which outgoing road they wish to take from the junction, etc. Such numerical fluxes were constructed and applied in \cite{Networks} and \cite{RKDG}, where it is assumed that the drivers behave in such a way as to maximize the total flux through the intersection. This leads to a complicated linear programming problem, which can be explicitly solved (giving an explicit construction of the numerical flux) only in the simplest cases, cf. \cite{Networks} and \cite{RKDG}. We take a slightly different approach, where the resulting numerical fluxes are explicitly constructed on an arbitrary junction based on the traffic distribution requirements. The construction seems more natural for human drivers, who are mainly concerned with the traffic density at their specific pair of incoming and outgoing roads and are (somewhat selfishly) not concerned with maximizing the total flux of all the traffic through the junction. The latter case is much more realistic e.g. for a swarm of centrally coordinated or communicating autonomous vehicles. 

We have already described the basic construction of our numerical flux in the paper \cite{Prvni_clanek} which was however based on a generic classical numerical flux, as used in DG methods on single roads. In this paper, we refine the construction and base it on the \emph{Godunov} numerical flux, which is based on the exact solution of a local Riemann problem, and is therefore a  natural numerical flux also from the point of view of the PDE theory, since it corresponds to the so called Bardos-LeRoux-Nédélec boundary conditions, which is the correct way how to prescribed Dirichlet data on a boundary. In this paper we take the classical Godunov numerical flux and use it to construct a Godunov-like numerical flux at a general junction, which is based on the drivers' preferences described by \emph{traffic distribution coefficients}. Actually, we derived two similar numerical fluxes that differ in the way the traffic distribution coefficients are treated. We then proceed to analyze the resulting DG scheme, namely we prove a discrete analogue of the Rankine-Hugoniot condition at the junction, from which we derive global conservativity of the scheme across the whole network. As it turns out, our numerical flux(es) do not exactly satisfy in all situations the apriori preferences of the drivers in the form of relations given by the traffic distribution coefficients. We analyze this effect in detail and derive relations for the \emph{traffic distribution error}, which we then interpret. Namely, we can show that if (in some sense) a junction is not congested, the drivers follow their predetermined preferences. However this is not true when there is a congestion in the junction, and the original traffic distribution coefficients are not satisfied exactly. We interpret this in two ways -- (a) as typical human behavior, where some drivers decide to change their route if they see that the one they originally chose is blocked, or (b) that there are dedicated turning lanes in the incoming roads before the junctions. Such turning lanes allow some drivers, whose route of choice is not blocked to pass the ones whose preferred outgoing road is congested. If there are no dedicated turning lanes than a traffic jam on one of the outgoing roads blocks all of the traffic in the entire junction, even though other outgoing roads may be completely free. This is what happens in the numerical flux considered in \cite{Networks}, \cite{RKDG}, but not in our case. Finally, we demonstrate the mentioned phenomena on simple numerical test cases. 

The structure of the paper is as follows. In Section \ref{sec:1} we introduce the basic concepts and notation needed to describe traffic flows on networks along with the traffic distribution coefficients describing the drivers' preferences. In Section \ref{sec:2}, we introduce the discontinuous Galerkin method on single roads and introduce and reformulate the classical Godunov numerical flux. In Section \ref{sec_numerical_fluxes_junctions} we review the approach of \cite{Networks}, \cite{RKDG} and construct our Godunov-like numerical fluxes at junctions along with the corresponding DG formulation on networks. In Section \ref{sec:5}, we prove the conservativity of the DG scheme on networks with the considered Godunov fluxes and analyze the traffic distribution error. Finally, we present numerical results in Section \ref{sec_results}.

\section{Macroscopic traffic flow models on networks}
\label{sec:1}
We begin with the mathematical description of macroscopic vehicular traffic, cf. \cite{Lecture_notes_Jungel}, \cite{Mathematical_Framework} and \cite{vanWageningen-Kessels} for a more detailed treatment of the subject. First, we consider a single road described mathematically as a one-dimensional interval. In basic macroscopic models, traffic flow is described by three fundamental quantities -- \emph{traffic flow $Q$, traffic density $\rho$} and \emph{mean traffic flow velocity $V$}, all of these being functions of both the spatial position $x$ and time $t$.

The basic governing equation of traffic flow is derived using the assumption that the number of cars in an arbitrary segment $[x_1,x_2]$ of the road changes only due to the flux through the endpoints, i.e. 
\begin{equation}
\dfrac{\dd}{\dd t}\int_{x_1}^{x_2}\rho(x,t)\,\dd x = Q(x_1,t)-Q(x_2,t).
\end{equation}
Writing the right-hand side as an integral and eliminating the integral over the arbitrarily chosen $[x_1,x_2]$ gives the conservation law for $\rho$ in the form
\begin{equation}\label{Conservation_law}
\dfrac{\partial}{\partial t}\rho(x,t)+\dfrac{\partial}{\partial x}Q(x,t)=0.
\end{equation}
Equation (\ref{Conservation_law}) must be supplemented by an initial condition
and appropriate boundary conditions which we will treat in detail in the case of networks of roads.

Equation (\ref{Conservation_law}) is underdetermined, as there is a single equation for two unknowns. Thus we need to supply another equation or relation between the variables. Greenshields described a relation between traffic density and traffic flow in the paper \cite{Greenshields35}. He made the assumption derived from observations that in homogeneous traffic (traffic with no changes in time and space), the traffic flow $Q$ is a function which depends only on the density $\rho$. Let us denote the equilibrium flow of homogeneous traffic as $Q_e$, derived from $Q$. The relationship between the $\rho$ and $Q_e$ is described by the so-called \emph{fundamental diagram}. The approach where we use the equilibrium traffic flow $Q_e$ in equation (\ref{Conservation_law}) is called the Lighthill-Whitham-Richards (LWR) traffic flow model and results in the equation
\begin{equation}\label{LWR_problem}
\begin{split}
\rho_t+\big(Q_e(\rho)\big)_x=0,&\qquad x\in\R,\ t>0,\\
\rho(x,0)=\rho_0(x),&\qquad x\in\R.
\end{split}
\end{equation}
Equation (\ref{LWR_problem}) belongs to the class of \emph{nonlinear first order hyperbolic equations} and, for practical purposes will be considered on finite intervals with appropriate boundary conditions. 

There are many different proposals for the equilibrium traffic flow $Q_e$ derived from real traffic data, cf.  \cite{Mathematical_Framework}. Here we present only \emph{Greenshields model}, which defines the equilibrium traffic flow as
\[
Q_e(\rho)=v_{\max}\,\rho\left( 1-\dfrac{\rho}{\rho_{\max}}\right),
\]
where $v_{\max}$ is the maximal velocity and $\rho_{\max}$ is the maximal density. We can see the fundamental diagram in Figure \ref{obr12}, where $v_{\max}=\rho_{\max}=1$.
\begin{figure}[b!]\centering
\subfloat[Velocity--density diagram.]{\includegraphics[height=1.5in]{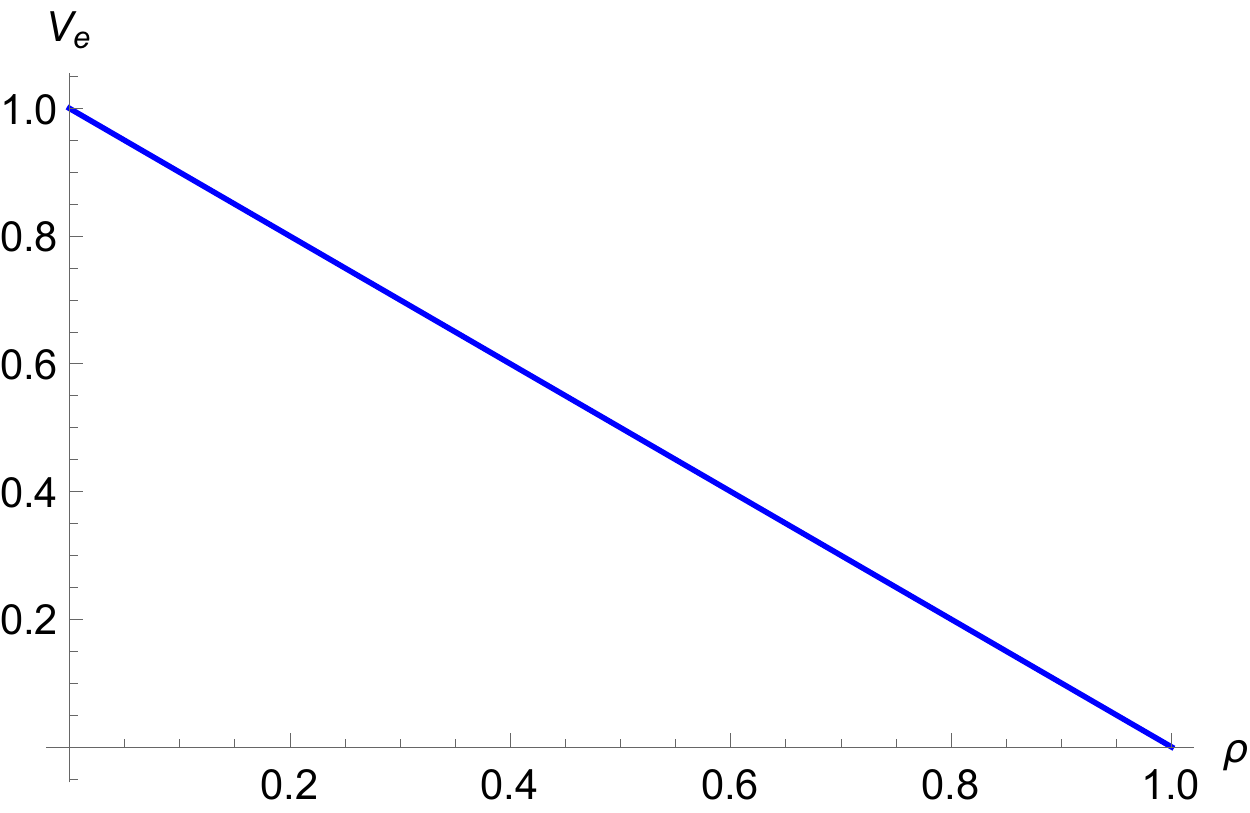}}
\hspace{5pt}
\subfloat[Flow--density diagram.]{\includegraphics[height=1.5in]{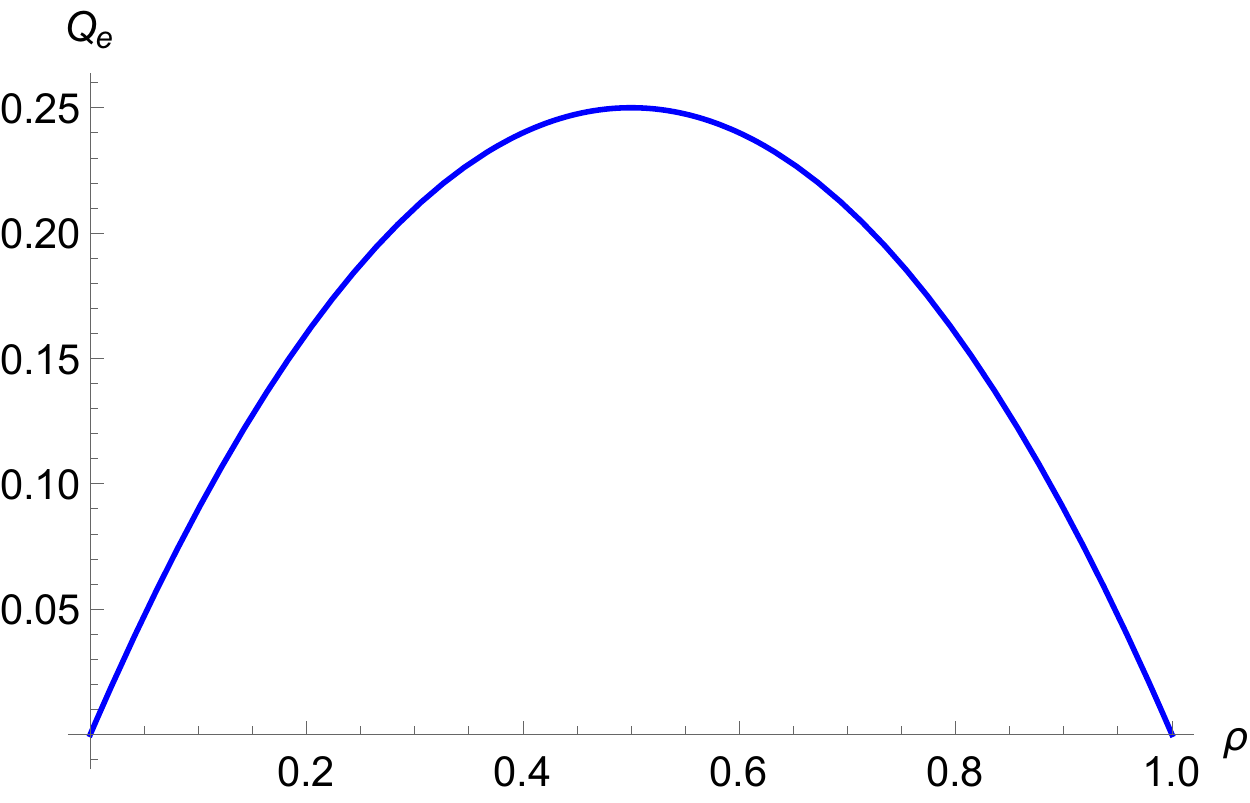}}
\caption{Fundamental diagrams of the Greenshields model.}
\label{obr12}
\end{figure}

Now we consider a road network represented by a directed graph, following \cite{Networks}. The graph is a finite collection of directed edges (roads), connected together at vertices (intersections). Each vertex has a finite set of \emph{incoming edges} and \emph{outgoing edges}. 
\begin{definition}[Network]\label{Def_network}	
We define a \emph{network} as a couple $(\kI,\kJ)$, where $\kI=\lbrace I_n\rbrace_{n=1}^N$ is a finite set of edges and $\kJ=\lbrace J_m\rbrace_{m=1}^M$ is a finite set of vertices. Each edge $I_n$ is represented by an interval $[a_n,b_n]\subseteq[-\infty,\infty],\ n=1,\ldots,N$. Each vertex $J_m$ is a union of two non--empty subsets $\text{Inc}(J_m)$ and $\text{Out}(J_m)$ of $\lbrace 1,\ldots,N\rbrace$ representing \emph{incoming} and \emph{outgoing edges}, respectively. We assume the following:
\begin{itemize}
\item[(i)] For all $J_i,J_j\in\kJ,\ i\neq j:\text{Inc}(J_i)\cap\text{Inc}(J_j)=\emptyset$ and $\text{Out}(J_i)\cap\text{Out}(J_j)=\emptyset$.
\item[(ii)] If $i\notin\cup_{J\in\kJ}\text{Inc}(J)$, $i\in\lbrace 1,\ldots,N\rbrace$, then $b_i=\infty$ and if $i\notin\cup_{J\in\kJ}\text{Out}(J)$, $i\in\lbrace 1,\ldots,N\rbrace$, then $a_i=-\infty$. Moreover, for all $i\in\lbrace 1,\ldots,N\rbrace: i\in\cup_{J\in\kJ}\text{Inc}(J)$ or $i\in\cup_{J\in\kJ}\text{Out}(J)$.
\end{itemize}
\end{definition}

Condition (i) states that each edge can be incoming for at most one vertex and outgoing for at most one vertex. Condition (ii) states that edges that are connected to only one vertex extend to $\pm\infty$. Of course in practice artificial inflow/outflow boundaries are introduced on such edges in the numerical solution. We can see an example in Figure \ref{obr15}.

\begin{figure}[t!]\centering
\includegraphics[height=1.5in]{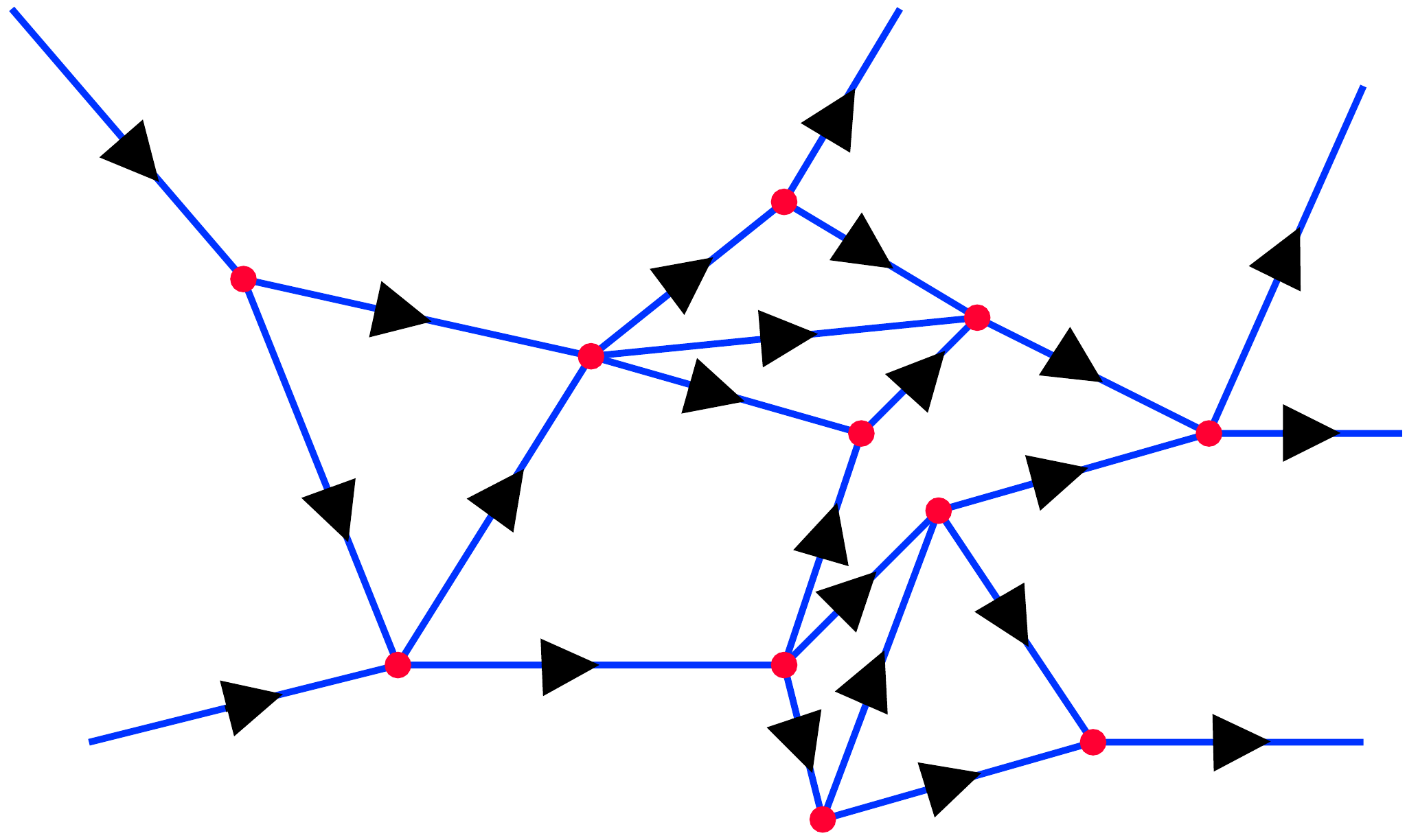}
\caption{Example of a network.}
\label{obr15}
\end{figure}

As we are dealing with traffic flows described by LWR models, we assume that the traffic on each edge number $i\in\lbrace 1,\ldots,N\rbrace$ is described by a conservation law of the general form
\begin{equation}\label{LWR_network}
\begin{split}
({u}_i)_t+\left(\f({u}_i)\right)_x=0,&\qquad x\in(a_i,b_i),\ t>0,\\
{u}_i(x,0)={u}_{0,i}(x),&\qquad x\in(a_i,b_i),
\end{split}
\end{equation}
where ${u}_i:(a_i,b_i)\times[0,\infty)\to\R$ is the traffic density on the $i$-th edge (road). We note that although our primary interest are traffic flow models, the system of equations (\ref{LWR_network}) can represent general conservation laws, which is the reason why we have abandoned the notation based on traffic density $\rho$ and traffic flow $Q$ and use the generic notation $u$ and $f$ in the governing equations. We note that we will assume that the convective flux $f$ has a global maximum at some \emph{critical value} (\emph{critical density}) $u_*$ and $f$ is non--decreasing on the interval $(-\infty,u_*]$ and non--increasing on $[u_*,\infty)$, cf. Section \ref{Subsec:Godunov}. This assumption is typical for traffic flow models.

What remains is to describe the behavior of traffic at junctions. For this purpose it is sufficient to first consider a single vertex (junction) and its incoming and outgoing roads for simplicity. The resulting considerations can then be applied separately to each vertex of the general network. 

We consider a network $(\kI,\kJ)$ and fix a vertex $J\in\kJ$ for which we assume that $\text{Inc}(J)=\lbrace 1,\ldots,n\rbrace$ and $\text{Out}(J)=\lbrace n+1,\ldots,n+m\rbrace$. We define the spatial limits of traffic densities on individual roads at the common vertex $J$ as
\[
{u}_i^{-}(b_i,t):=\lim_{x\to b_i\!-}{u}_i(x,t)\quad\text{and}\quad {u}_j^{+}(a_j,t):=\lim_{x\to a_j\!+}{u}_j(x,t)
\]
for all $i=1,\ldots,n$ and $j=n+1,\ldots,n+m$. Now we can define the solution at a junction.
\begin{definition}[Traffic solution at a junction]\label{def_Traffic_solution_at_junction}
Let $J$ be a junction with incoming roads $I_1,\ldots,I_n$ and outgoing road $I_{n+1},\ldots,I_{n+m}$. Then we define a weak solution at $J$ as a collection of functions ${u}_l:I_l\times[0,\infty)\rightarrow\R$, $l=1,\ldots,n+m$ such that
\[
\sum_{l=1}^{n+m}\int_{a_l}^{b_l}\int_0^\infty\left( {u}_l\dfrac{\partial\varphi_l}{\partial t}+\f({u}_l)\dfrac{\partial\varphi_l}{\partial x}\right)\dd t\,\dd x=0
\]
holds for every $\varphi_l\in\kC_0^1([a_l,b_l]\times [0,\infty))$, $l=1,\ldots,n+m$, that are also smooth across the junction, i.e.
\[
\varphi_i^{-}(b_i,\cdot)=\varphi_j^{+}(a_j,\cdot),\qquad\left(\dfrac{\partial\varphi_i}{\partial x}\right)^{-}(b_i,\cdot)=\left(\dfrac{\partial\varphi_j}{\partial x}\right)^{+}(a_j,\cdot),
\]
for all $i\in\lbrace 1,\ldots,n\rbrace$ and $j\in\lbrace n+1,\ldots,n+m\rbrace$.
\end{definition}

The basic property of the weak solution from Definition \ref{def_Traffic_solution_at_junction} is that it satisfies the Rankine-Hugoniot condition which is essentially the conservation of vehicles at the junction.
\begin{lemma}
Let ${u}=\T{({u}_1,\ldots,{u}_{n+m})}$ be a weak solution at the junction $J$ such that each $x\rightarrow{u}_i(x,t)$ has bounded variation. Then ${u}$ satisfies the \emph{Rankine-Hugoniot condition}
\begin{equation}\label{Junction_equation}
\sum_{i=1}^n \f({u}_i^{-}(b_i,t))=\sum_{j=n+1}^{n+m} \f({u}_j^{+}(a_j,t))
\end{equation}
for almost every $t>0$ at the junction $J$.
\end{lemma}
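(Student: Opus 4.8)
The plan is to derive the Rankine--Hugoniot condition \eqref{Junction_equation} directly from the weak formulation in Definition \ref{def_Traffic_solution_at_junction} by a standard test-function localization argument, carried out near the junction. First I would fix a time $\tau>0$ at which all the one-sided traces ${u}_i^-(b_i,\cdot)$ and ${u}_j^+(a_j,\cdot)$ exist (this is where the bounded-variation hypothesis on $x\mapsto {u}_i(x,t)$ enters: it guarantees the one-sided limits are well defined for a.e.\ $t$, and moreover that $t\mapsto {u}_i^-(b_i,t)$ is itself a well-behaved, e.g.\ measurable with one-sided limits, function so that the Lebesgue point argument below makes sense). Then I would choose test functions $\varphi_l$ of a separated form $\varphi_l(x,t)=\psi_l(x)\theta(t)$, where $\theta\in\kC_0^1((0,\infty))$ is an arbitrary temporal cutoff and the $\psi_l$ are spatial bump functions localized at the junction: on each incoming edge $\psi_i$ is supported in a small interval $(b_i-\varepsilon,b_i]$ with $\psi_i(b_i)=1$, and on each outgoing edge $\psi_j$ is supported in $[a_j,a_j+\varepsilon)$ with $\psi_j(a_j)=1$. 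The smoothness-across-the-junction constraints $\varphi_i^-(b_i,\cdot)=\varphi_j^+(a_j,\cdot)$ and the matching of $x$-derivatives are satisfied by taking all these values equal to $\theta(t)$ and by arranging $\psi_i'(b_i)=\psi_j'(a_j)$ (e.g.\ all equal to $0$); such $\psi_l$ clearly exist.

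Next I would substitute this choice into the weak identity. The term $\sum_l\int\int {u}_l\,\partial_t\varphi_l = \sum_l\int\int {u}_l\,\psi_l(x)\theta'(t)$ is $O(\varepsilon)$ as $\varepsilon\to 0$ because each spatial integral is over an interval of length $\varepsilon$ and ${u}_l$ is (locally) bounded. For the flux term $\sum_l\int\int \f({u}_l)\,\psi_l'(x)\theta(t)$, I would integrate by parts in $x$ on each edge: on an incoming edge this produces $-\int_0^\infty\f({u}_i^-(b_i,t))\theta(t)\,\dd t - \int\int \partial_x(\f({u}_i))\,\psi_i\,\theta$, and analogously $+\int_0^\infty\f({u}_j^+(a_j,t))\theta(t)\,\dd t$ on an outgoing edge (the boundary contributions at the far ends $a_i$, $b_j$ vanish since $\psi_l$ is supported away from them). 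Alternatively, and perhaps more cleanly given only BV regularity, one avoids integrating by parts by noting that $\int_{b_i-\varepsilon}^{b_i}\f({u}_i(x,t))\psi_i'(x)\,\dd x \to -\f({u}_i^-(b_i,t))$ as $\varepsilon\to 0$ for a.e.\ $t$, provided the $\psi_i$ are chosen as rescaled copies of a fixed profile and one uses that $\f({u}_i(\cdot,t))$ has a one-sided limit at $b_i$; the remaining double integrals involving $\psi_i$ (not its derivative) are again $O(\varepsilon)$.

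Passing to the limit $\varepsilon\to 0$ (using dominated convergence, with the local $L^\infty$ bound on ${u}_l$ coming from BV plus boundedness, and the fact that the traces are finite a.e.) collapses the weak identity to
\[
\int_0^\infty\Big(\sum_{j=n+1}^{n+m}\f({u}_j^+(a_j,t)) - \sum_{i=1}^{n}\f({u}_i^-(b_i,t))\Big)\theta(t)\,\dd t = 0 .
\]
Since $\theta\in\kC_0^1((0,\infty))$ is arbitrary, the fundamental lemma of the calculus of variations yields $\sum_{i=1}^n\f({u}_i^-(b_i,t)) = \sum_{j=n+1}^{n+m}\f({u}_j^+(a_j,t))$ for a.e.\ $t>0$, which is \eqref{Junction_equation}.

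The main obstacle is purely a matter of regularity bookkeeping: justifying that the one-sided spatial traces exist and depend measurably on $t$, and that the limiting operations (sending $\varepsilon\to 0$, exchanging limit and integral, invoking Lebesgue points in $t$) are legitimate under only the bounded-variation hypothesis rather than, say, continuity. Everything else is the routine ``localize and integrate by parts'' computation. In the write-up I would state the needed consequences of BV (existence of one-sided limits, local boundedness) explicitly and keep the cutoff construction concrete so the derivative-matching conditions at the junction are manifestly satisfied.
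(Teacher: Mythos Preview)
Your proposal is correct and follows exactly the approach the paper indicates: the paper does not actually write out the argument but simply states that ``the proof is a simple application of integration by parts'' and refers to \cite[Lemma 5.1.9]{Networks}. Your localization-plus-integration-by-parts outline is precisely that standard argument, so there is nothing to add.
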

\begin{proof}
The proof is a simple application of integration by parts and can by found in  \cite[Lemma 5.1.9]{Networks}.
\end{proof}

Definition \ref{def_Traffic_solution_at_junction} simply enforces the conservation of vehicles at $J$. However it is also necessary to take into account the preferences of drivers how the traffic from incoming roads is distributed to outgoing roads according to some predetermined coefficients.

\begin{definition}[Traffic distribution matrix]
\label{def:Traffic_distribution_matrix}
Let $J$ be a fixed vertex with $n$ incoming edges and $m$ outgoing edges. We define a \emph{traffic distribution matrix} $A$ as
\[
A=\begin{bmatrix}
\alpha_{n+1,1} & \cdots & \alpha_{n+1,n}\\
\vdots & \vdots & \vdots \\
\alpha_{n+m,1} & \cdots & \alpha_{n+m,n}
\end{bmatrix},
\]
where $0\leq\alpha_{j,i}\leq 1$ for all $i\in\lbrace 1,\ldots,n\rbrace$, $j\in\lbrace n+1,\ldots,n+m\rbrace$ and
\begin{equation}\label{sum_alpha}
\sum_{j=n+1}^{n+m}\alpha_{j,i}=1
\end{equation}
holds for all $i\in\lbrace 1,\ldots,n\rbrace$.
\end{definition}

The $i^{th}$ column of $A$ describes how the traffic from the incoming road $I_i$ distributes to the outgoing roads at the junction $J$. In other words, if $X$ is the amount of traffic coming from road $I_i$ then $\alpha_{j,i}X$ is the desired amount of traffic going form $I_i$ towards road $I_j$.

As stated, for simplicity, we assume a \emph{simple network} with only one junction in the rest of this paper. All our definitions and theorems can then be extended straightforwardly to an arbitrary network $(\kI,\kJ)$.

\section{Discontinuous Galerkin method}
\label{sec:2}
We discretize the governing equation (\ref{LWR_problem}) using the \emph{discontinuous Galerkin} (DG) method. This method introduced by Reed and Hill in \cite{PrvniDG} represents a robust, reliable and accurate numerical method for the solution of first order hyperbolic problems. The DG method uses discontinuous piecewise polynomial approximations of the exact solution along with a suitable weak form of the governing equations and can thus be viewed as a combination of the the finite element and finite volume methods, cf. \cite{DG}, \cite{Stabilization}. One advantage of the DG method over standard finite elements is its robustness with respect to the Gibbs phenomenon. This occurs when a continuous approximation is used to approximate a discontinuous function -- these typically arise as solutions to nonlinear first order hyperbolic problems, such as those considered here.

In general, the DG method is described on a polygonal (polyhedral) domain $\Omega\subset\R^d$, $d\in\N$. Let $\kT_h$ be a partition of $\overline{\Omega}$ into a finite number of closed elements $K$ with mutually disjoint interiors, such that
\[
\overline{\Omega}=\bigcup_{K\in\kT_h}K.
\]
Since the traffic model is defined on a line, we consider $\Omega\subset\R$, $\Omega=(a,b)$. In the 1D case, an element $K$ is an interval $\left[ a_K,\ b_K\right]$, where $a_K$ and $b_K$ are boundary points of $K$. We set $h_K=\abs{b_K-a_K}$, $h=\max_{K\in\kT}h_K$. We denote the set of all boundary faces (points in 1D) of all elements by $\kF_h$. Further, we define the set of all inner points by
\[
\kF_h^I=\lbrace x\in\kF_h;\ x\in\Omega\rbrace 
\]
and the set of boundary points $\kF_h^B=\lbrace a,\ b\rbrace$. Obviously $\kF_h=\kF_h^I\cup\kF_h^B$. We use a suitable weak formulation of (\ref{LWR_problem}) on the \emph{broken Sobolev space} $H^k(\Omega,\ \kT_h)=\lbrace v;\ v|_K\in H^k(K),\ \forall K\in\kT_h\rbrace$, where $H^k(I)$, $k\in\N,$ is the Sobolev space over an interval $I$. Functions from this space will be approximated by discontinuous piecewise polynomial functions from the space
\[
S_h=\lbrace v;\ v|_K\in P^p(K),\ \forall K\in\kT_h\rbrace,
\]
where $P^p(K)$ denotes the space of all polynomials on $K$ of degree at most $p$.

For each point $x\in\kF_h^I$ there exist two neighbours $K_x^{-},\ K_x^{+}\in\kT_h$ such that $x=K_x^{-}\cap K_x^{+}$. Every function $v\in H^k(\Omega,\kT_h)$ is generally discontinuous at $x\in\kF_h^I$. Thus, for all $x\in\kF_h^I$, we introduce the following notation for traces and jumps:
\begin{equation*}
v^{-}(x)=\lim_{y\rightarrow x_-}v(y),\qquad v^{+}(x)=\lim_{y\rightarrow x_+}v(y),\qquad\left[ v\right]_x=v^{-}(x)-v^{+}(x).
\end{equation*}
In order to have consistent notation, in the point $x\in\kF_h^B$ we define
\begin{align*}
v^{+}(a)=\lim_{y\rightarrow a_+}v(y),\qquad&v(a)=-\left[ v\right]_a=v^{-}(a):=v^{+}(a),\\
v^{-}(b)=\lim_{y\rightarrow b_-}v(y),\qquad&v(b)=\left[ v\right]_b=v^{+}(b):=v^{-}(b).
\end{align*}
The definition of the jump $\left[ v\right]_a:=-v^{+}(a)$ or $\left[ v\right]_b:=v^{-}(b)$ may seem inconsistent with the definition on interior points. This notation is used due to the integration by parts in the following sections. For simplicity, if $\left[\cdot\right]_x$ appear in a sum of the form $\sum_{x\in\kF_h}\ldots$, we omit the index $x$ and write $\left[\cdot\right]$.

We formulate the DG method for first order hyperbolic problems of the form
\begin{align}
u_t+f(u)_x=0,&\qquad x\in\Omega,\ t\in(0,T),\label{DG_Hyperbolic_problem}\\
u=u_D,&\qquad x\in\kF_h^D,\ t\in(0,T),\\
u(x,0)=u_0(x),&\qquad x\in\Omega,\label{DG_Hyperbolic_problem_initial}
\end{align}
where the Dirichlet boundary condition $u_D:\kF_h^D\times(0,T)\rightarrow\R$ and the initial condition $u_0:\Omega\rightarrow\R$ are given functions. The Dirichlet boundary condition is prescribed only on the inlet $\kF_h^D\subseteq\kF_h^B$, respecting the direction of information propagation. The function $f\in\kC^1(\R)$ is called the \emph{convective flux}. Our aim is to seek a function $u:\Omega\times(0,T)\rightarrow\R$ such that (\ref{DG_Hyperbolic_problem})--(\ref{DG_Hyperbolic_problem_initial}) is satisfied. As we have seen, problem (\ref{DG_Hyperbolic_problem}) is the main part of macroscopic equations for traffic.

In order to derive the DG formulation of (\ref{DG_Hyperbolic_problem}), we multiply by a test function $\varphi\in H^1(\Omega,\kT_h)$ and integrate over an arbitrary element $K\in\kT_h$. Then we apply integration by parts and obtain
\begin{equation}
\int_K u_t\varphi\ \dd x-\int_K f(u)\varphi'\ \dd x+f(u(b_K,t))\varphi^{-}(b_K)-f(u(a_K,t))\varphi^{+}(a_K)=0.
\end{equation}
Finally, we sum over all $K\in\kT_h$ and obtain
\[
\int_\Omega u_t\varphi\ \dd x-\sum_{K\in\kT_h}\int_K f(u)\varphi'\ \dd x+\sum_{x\in\kF_h}f(u)\left[\varphi\right] =0.
\]

We wish to approximate $u$ by a function $u_h\in H^1(\Omega,\kT_h)$ which is in general discontinuous on $\kF_h$. Thus, we need to give proper meaning to the function $f(u_h)$ in points $x\in\kF_h$. We proceed similarly as in the finite volume method and use the approximation
\begin{equation}
f(u_h)\approx H(u_h^{-},u_h^{+}),
\end{equation}
where $H(\cdotp,\cdotp)$ is a \emph{numerical flux}, cf. \cite{DG}. Finally, we define the DG solution of problem (\ref{DG_Hyperbolic_problem}).

\begin{definition}[DG solution]
The function $u_h:\Omega\times(0,T)\rightarrow\R$ is called a DG finite element solution of hyperbolic problem (\ref{DG_Hyperbolic_problem})--(\ref{DG_Hyperbolic_problem_initial}) if the following properties hold:
\begin{itemize}
\item[(i)] $u_h\in\kC^1\left(\left[ 0,T\right];S_h\right)$.
\item[(ii)] $u_h(0)=u_{h0}$, where $u_{h0}$ denotes an $S_h$ approximation of the initial condition $u_0$.
\item[(iii)] $u_h=u_D$ for all $x\in\kF_h^D,\ t\in(0,T)$.
\item[(iv)] For all $\varphi\in S_h$ and for all $t\in\left( 0,T\right)$, $u_h$ satisfies
\begin{equation}\label{DG_Weak}
\int_\Omega (u_h)_t\varphi\ \dd x-\sum_{K\in\kT_h}\int_K f(u_h)\varphi'\ \dd x+\sum_{x\in\kF_h}H(u^{-}_h,u^{+}_h)\left[\varphi\right] =0.
\end{equation}
\end{itemize}
\end{definition}

\subsection{Godunov numerical flux}
\label{Subsec:Godunov}
In our implementation, we use the \emph{Godunov} numerical flux, cf. \cite{Stabilization}. This is defined as the flux $f$ evaluated at the exact solution of the Riemann problem with the piecewise defined initial condition $u^{-}$ and $u^{+}$. This can be shown to be equivalent to the more practical form, cf. \cite{Stabilization},
\begin{align}\label{Godunov_flux_orig}
H^{God}_{orig}\left(u^{-},u^{+}\right)=\begin{cases}
\min_{u^{-}\leq u\leq u^{+}} f(u),\qquad &\text{if }u^{-}<u^{+},\\
\max_{u^{+}\leq u\leq u^{-}} f(u),\qquad &\text{if }u^{-}\geq u^{+}.
\end{cases}
\end{align}

We call this form the \emph{original} form and for our purposes, we use an alternative form which is inspired by the maximum possible traffic flow approach (see Section \ref{Section_MPTF}) in the case of one incoming and one outgoing road.

\begin{definition}[Alternative form of the Godunov numerical flux]\label{def_Alternative_Godunov}
Let the convective flux $f$ have a global maximum at $u_*$ and $f$ is non--decreasing on the interval $(-\infty,u_*]$ and non--increasing on $[u_*,\infty)$. Then the Godunov numerical flux is defined as
\begin{equation}\label{Godunov_flux}
H^{God}\left(u^{-},u^{+}\right)=\min\left\lbrace f_{in}(u^{-}),f_{out}(u^{+})\right\rbrace,
\end{equation}
where
\begin{align*}
f_{in}(u^{-})=\begin{cases}
f(u^{-}),&\text{if }u^{-}<u_*,\\
f(u_*),&\text{if }u^{-}\geq u_*,
\end{cases}
\qquad f_{out}(u^{+})=\begin{cases}
f(u_*),&\text{if }u^{+}\leq u_*,\\
f(u^{+}),&\text{if }u^{+}>u_*.
\end{cases}
\end{align*}
\end{definition}

This can be interpreted as the maximal possible flow through the common boundary, where $f_{in}$ is the maximal possible inflow from the left element and $f_{out}$ is the maximal possible outflow to the right element. We can see $f_{in}$ and $f_{out}$ for Greenshields traffic flow in Figure \ref{Obr_f_in_f_out}.

\begin{figure}[t!]\centering
\includegraphics[height=2in]{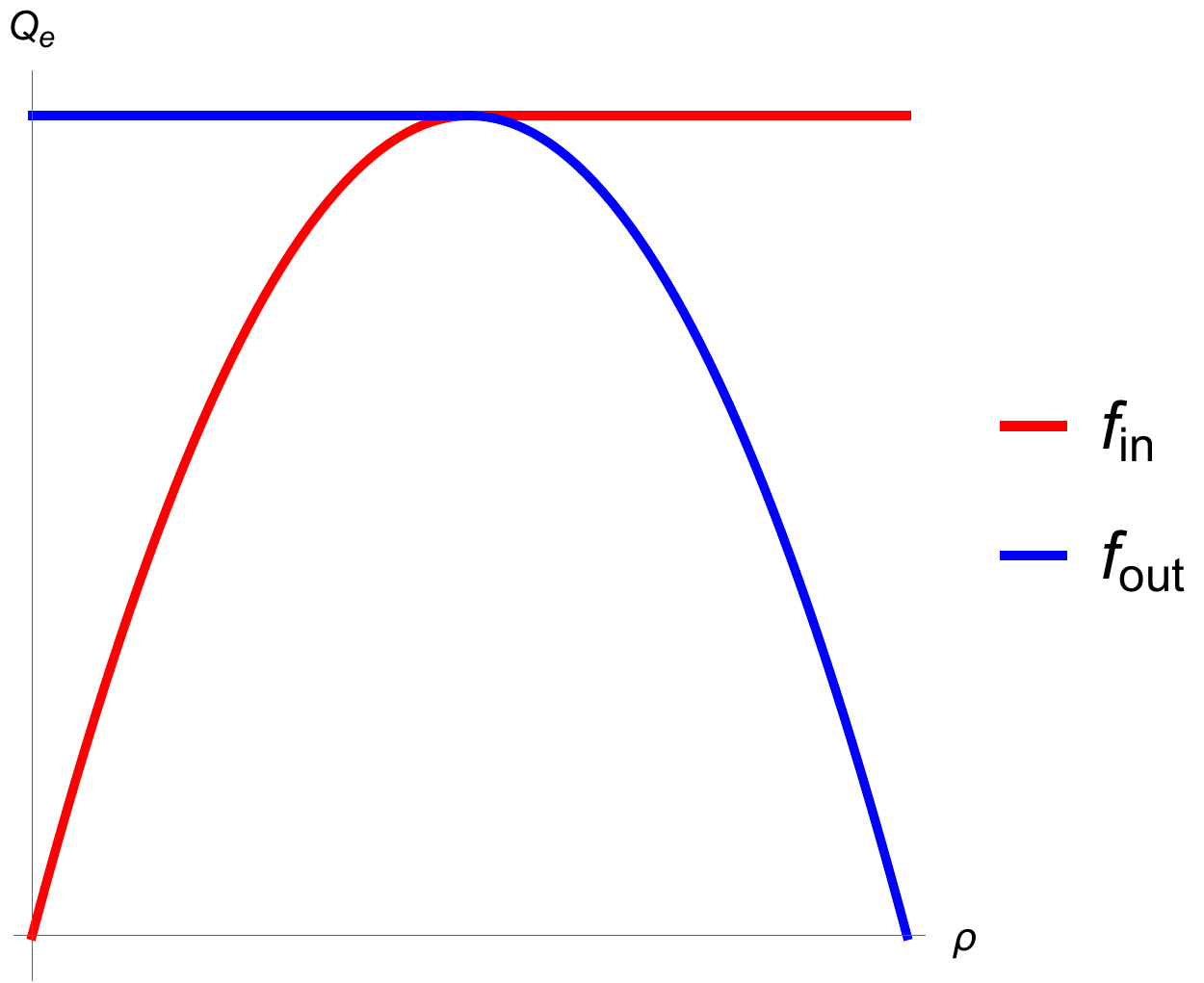}
\caption{\textcolor{red}{$f_{in}$} and \textcolor{blue}{$f_{out}$} for Greenshields traffic flow.}
\label{Obr_f_in_f_out}
\end{figure}

Formulas (\ref{Godunov_flux_orig}) and (\ref{Godunov_flux}) are equivalent, which is shown in the following lemma.
\begin{lemma}
If the convective flux $f$ attains its global maximum at $u_*$ and is non--decreasing on $(-\infty,u_*]$ and non--increasing on $[u_*,\infty)$, then $H^{God}_{orig}\left(u^{-},u^{+}\right)=H^{God}\left(u^{-},u^{+}\right)$ for all $u^{-},u^{+}\in\R$.
\end{lemma}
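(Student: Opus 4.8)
The plan is to verify the identity by a direct case analysis according to the positions of $u^{-}$ and $u^{+}$ relative to the critical value $u_*$ (and, where needed, relative to each other). Under the stated monotonicity assumption on $f$ each case reduces to an elementary computation, so no genuine analytic difficulty arises; the step I expect to be the most delicate is merely checking consistency of the two definitions at the boundary configurations $u^{-}=u_*$, $u^{+}=u_*$ and in the degenerate case $u^{-}=u^{+}$.

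First I would record two elementary consequences of the assumption that $f$ is non-decreasing on $(-\infty,u_*]$ and non-increasing on $[u_*,\infty)$: (a) on any interval $[c,d]$ the minimum of $f$ is attained at one of the endpoints, i.e.\ $\min_{c\le u\le d}f(u)=\min\{f(c),f(d)\}$; (b) if $u_*\in[c,d]$ then $\max_{c\le u\le d}f(u)=f(u_*)$. I would also rewrite the building blocks of (\ref{Godunov_flux}) as $f_{in}(u^{-})=\max_{u\le u^{-}}f(u)$ and $f_{out}(u^{+})=\max_{u\ge u^{+}}f(u)$, which is immediate from the monotonicity and which makes transparent that $f_{in}(u^{-})=f(u^{-})$ exactly when $u^{-}\le u_*$, while $f_{out}(u^{+})=f(u^{+})$ exactly when $u^{+}\ge u_*$.

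Then I would run through the four cases. If $u^{-}\le u_*$ and $u^{+}\le u_*$, then $f_{in}(u^{-})=f(u^{-})\le f(u_*)=f_{out}(u^{+})$, so $H^{God}=f(u^{-})$; on the other side, the relevant extremum in (\ref{Godunov_flux_orig}) is taken over a subinterval of $(-\infty,u_*]$ on which $f$ is non-decreasing, and by (a)/(b) it equals $f(u^{-})$ in both the $u^{-}<u^{+}$ and the $u^{-}\ge u^{+}$ branch. The case $u^{-}\ge u_*$, $u^{+}\ge u_*$ is symmetric, both expressions equalling $f(u^{+})$. If $u^{-}\le u_*\le u^{+}$ (hence $u^{-}\le u^{+}$), then $H^{God}=\min\{f(u^{-}),f(u^{+})\}$, while (\ref{Godunov_flux_orig}) gives $\min_{u^{-}\le u\le u^{+}}f(u)$, which equals $\min\{f(u^{-}),f(u^{+})\}$ by (a); the sub-case $u^{-}=u^{+}=u_*$ is checked directly. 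Finally, if $u^{+}\le u_*\le u^{-}$ (hence $u^{-}\ge u^{+}$), then $H^{God}=\min\{f(u_*),f(u_*)\}=f(u_*)$, and (\ref{Godunov_flux_orig}) gives $\max_{u^{+}\le u\le u^{-}}f(u)=f(u_*)$ by (b).

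These four cases exhaust all $(u^{-},u^{+})\in\R^2$ and agree on their overlaps, which yields $H^{God}_{orig}=H^{God}$. The only real obstacle is the bookkeeping: confirming that the endpoint/peak conventions built into the two formulas match at $u^{-}=u_*$, at $u^{+}=u_*$, and when $u^{-}=u^{+}$ — and this is precisely what the reformulations in the second paragraph are meant to make routine.
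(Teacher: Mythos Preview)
Your proof is correct and follows essentially the same approach as the paper: a direct case analysis using the monotonicity of $f$ on either side of $u_*$. The only difference is organizational --- the paper splits first on the sign of $u^{-}-u^{+}$ and then on the position of $u_*$ (six sub-cases), whereas you split first on the positions of $u^{-},u^{+}$ relative to $u_*$ (four cases) and handle the ordering of $u^{-},u^{+}$ inside each; your preliminary observations (a), (b) and the rewriting $f_{in}(u^{-})=\max_{u\le u^{-}}f(u)$, $f_{out}(u^{+})=\max_{u\ge u^{+}}f(u)$ streamline the bookkeeping but do not change the substance of the argument.
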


\begin{proof}
We divide the proof into six different cases. There are two main cases based on the ordering of $u^{-}$ and $u^{+}$. Then for each case we present three sub-cases, which place $u_*$ in different positions.
\begin{itemize}
\item[1. ] $u^{-}<u^{+}$
	\begin{itemize}
	\item[a) ] If $u^{-}<u^{+}\leq u_*$ then $H^{God}_{orig}=f(u^{-})$ and $H^{God}=\min\left\lbrace f(u^{-}),f(u_*)\right\rbrace=f(u^{-})$.
	\item[b) ] If $u^{-}\leq u_*<u^{+}$ then $H^{God}_{orig}=\min\left\lbrace f(u^{-}),f(u^{+})\right\rbrace$ and $H^{God}=\min\left\lbrace f(u^{-}),f(u^{+})\right\rbrace$.
	\item[c) ] If $u_*<u^{-}<u^{+}$ then $H^{God}_{orig}=f(u^{+})$ and $H^{God}=\min\left\lbrace f(u_*),f(u^{+})\right\rbrace=f(u^{+})$.
	\end{itemize}
\item[2. ] $u^{-}\geq u^{+}$
	\begin{itemize}
	\item[a) ] If $u^{+}\leq u^{-}< u_*$ then $H^{God}_{orig}=f(u^{-})$ and $H^{God}=\min\left\lbrace f(u^{-}),f(u_*)\right\rbrace=f(u^{-})$.
	\item[b) ] If $u^{+}\leq u_*\leq u^{-}$ then $H^{God}_{orig}=f(u_*)$ and $H^{God}=\min\left\lbrace f(u_*),f(u_*)\right\rbrace=f(u_*)$.
	\item[c) ] If $u_*<u^{+}\leq u^{-}$ then $H^{God}_{orig}=f(u^{+})$ and $H^{God}=\min\left\lbrace f(u_*),f(u^{+})\right\rbrace=f(u^{+})$.
	\end{itemize}
\end{itemize}
We showed that $H^{God}_{orig}\left(u^{-},u^{+}\right)=H^{God}\left(u^{-},u^{+}\right)$ in every possible situation.
\end{proof}

In the rest of the paper, the numerical flux $H(\cdotp,\cdotp)$ will always be the Godunov numerical flux written in the alternative form (\ref{Godunov_flux}).

\section{Numerical fluxes at junctions} \label{sec_numerical_fluxes_junctions}
In order to formulate the DG scheme on a simple network, we first need to construct the numerical fluxes at the junction. Such a numerical flux is considered in \cite{RKDG} and \cite{Networks} which is based on the assumption that the drivers wish to maximize the total flux through the junction while respecting the traffic distribution exactly. We discuss this approach in Section \ref{Section_MPTF}.

In this paper we take a different approach which has the advantage that it is simple and explicitly constructed for all junction types unlike that of \cite{RKDG} and \cite{Networks}, which leads to the solution of a linear programming problem. We will present two different constructions of the numerical fluxes at the junction based on the Godunov numerical flux in Sections \ref{sec_alpha_outside} and \ref{sec_alpha_inside}. We shall prove the basic properties of our construction and discuss the differences with the approach of \cite{RKDG} and \cite{Networks}.

\subsection{Maximum possible traffic flow}\label{Section_MPTF}
Based on the traffic distribution matrix, the authors of \cite{Networks} define the following admissible traffic solution at a junction, also used for numerical simulations in \cite{RKDG}.

\begin{definition}[Admissible traffic solution at a junction, following \cite{Networks}]\label{def_network_solution}
Let $ u=\T{( u_1,\ldots, u_{n+m})}$ be such that $ u_i(\cdot,t)$ is of bounded variation for every $t\geq 0$. Then $ u$ is called an \emph{admissible weak solution} of (\ref{LWR_network}) related to the matrix $A$ at the junction $J$ if the following properties hold:
\begin{itemize}
\item[(i)] $ u$ is a weak solution at the junction $J$.
\item[(ii)] $\f( u_j^{+}(a_j,\cdot))=\sum_{i=1}^n\alpha_{j,i}\f( u_i^{-}(b_i,\cdot)),$ for all $j=n+1,\ldots,n+m$.
\item[(iii)] $\sum_{i=1}^n \f( u_i^{-}(b_i,\cdot))$ is maximal subject to (i) and (ii).
\end{itemize}
\end{definition}

\begin{remark}
Condition (ii) simply states that traffic from incoming roads is distributed to outgoing roads according to the traffic distribution matrix. Condition (iii) is a mathematical formulation of the assumption made in \cite{Networks}, that respecting (ii), \uvoz{drivers choose so as to maximize the total flux} through the junction.
\end{remark}

One problem with the approach of \cite{Networks} and \cite{RKDG} is that explicitly constructing the fluxes from Definition \ref{def_network_solution} requires the solution of a linear programming problem on the incoming fluxes. This is done in \cite{Networks} for the purposes of constructing a Riemann solver at the junction and in \cite{RKDG} for the purposes of obtaining numerical fluxes at the junction in order to formulate the DG scheme. Closed-form solutions are provided in \cite{RKDG} in the special cases $n=1,m=2$ and  $n=2,m=1$ and $n=2,m=2$. In Section \ref{sec_numerical_fluxes_junctions}, we present an alternative construction of fluxes at the junction which has the advantage of a simple formulation for general $n,m$. We will give an interpretation of our construction, which shows that it is more suited for certain situations, giving more realistic behavior of the drivers, than the approach from Definition \ref{def_network_solution}. We compare the two approaches in Section \ref{sec_results}.

\subsubsection{One incoming and two outgoing roads}\label{Section_MPTF_example}
This example is important to us, because it inspires us in the construction of the $\alpha$-inside Godunov flux (see Section \ref{sec_alpha_inside}). We use the method described in \cite[Section 2.2]{RKDG} with our notation.

In this case, we have distribution coefficient $\alpha_{2,1}=\alpha$ and $\alpha_{3,1}=1-\alpha$. Then according to \cite{Networks}, we calculate the maximum possible inflow to the junction from the incoming road as
\begin{equation}\label{rov_max_possible_flow}
H_1(t)=\min\left\lbrace f_{in}( u_1^{-}(b_1,t)),\dfrac{f_{out}( u_2^{+}(a_2,t))}{\alpha},\dfrac{f_{out}( u_3^{+}(a_3,t))}{1-\alpha}\right\rbrace.
\end{equation}
The outflow from the junction to an outgoing road is calculated as $H_1$ multiplied by the corresponding distribution coefficient, i.e. $H_2(t)=\alpha H_1(t)$ and $H_3(t)=(1-\alpha)H_1(t)$.

\begin{remark}
\label{rem:blocked_junction}
We note, that traffic congestion on one of the outgoing roads influences the traffic flow to the second outgoing road. For example, when $f_{out}( u_2^{+})=0$, then $H_1=H_2=H_3=0$. Therefore, the intersection is completely blocked, even in the case when the other outgoing road $I_3$ is completely empty ($ u_3\equiv 0$). This is caused by the assumption in Definition \ref{def_network_solution} that drivers strictly adhere to their original preferences for the choice of the outgoing road, even on a congested intersection. Thus the flows to the outgoing roads $I_2, I_3$ must always be in the ratio $\alpha$ to $1-\alpha$, irrespective of the traffic situation. 
\end{remark}

\subsection{$\alpha$-outside Godunov flux}
\label{sec_alpha_outside}
In our previous paper \cite{Prvni_clanek}, we based the construction of the numerical flux at the junction on the Lax-Friedrichs numerical flux. In this paper we start from the Godunov numerical flux, which will have advantages that we will see in Section \ref{Section_Distribution_Error}. In either case, we construct the junction fluxes as follows.

At the junction, we consider an incoming road $I_i$ and an outgoing road $I_j$. If these roads were the only roads at the junction, i.e. if they were directly connected to each other, the (numerical) flux of traffic from $I_i$ to $I_j$ would simply be $H\big({u}_{hi}^{-}(b_i,t),{u}_{hj}^{+}(a_j,t)\big)$, where ${u}_{hi}$ and ${u}_{hj}$ are the DG solutions on $I_i$ and $I_j$, respectively. From the traffic distribution matrix, we know the ratios of the traffic flow distribution to the outgoing roads. Thus, we take the numerical flux $H_j(t)$ at the left point of the outgoing road $I_j$, i.e. at the junction, at time $t$ as
\begin{equation}\label{num_flux_out}
H_j(t):=\sum_{i=1}^{n} \alpha_{j,i} H\big({u}_{hi}^{-}(b_i,t),{u}_{hj}^{+}(a_j,t)\big),
\end{equation}
for $j=n+1,\ldots,n+m$. The numerical flux $H_j(t)$ can be viewed as the DG analogue of taking the combined traffic outflow $\sum_{i=1}^{n} \alpha_{j,i} \f\big({u}_i^{-}(b_i,t)\big)$ from all incoming roads and prescribing it as the inflow of traffic to the road $I_j$.

Similarly, we take the numerical flux $H_i(t)$ at the right point of the incoming road $I_i$, i.e. at the junction, at time $t$ as
\begin{equation}\label{num_flux_in}
H_i(t):=\sum_{j=n+1}^{n+m} \alpha_{j,i} H\big({u}_{hi}^{-}(b_i,t),{u}_{hj}^{+}(a_j,t)\big),
\end{equation}
for $i=1,\ldots,n$. Again, this can be viewed as an approximation of the traffic  flow $\sum_{j=n+1}^{n+m} \alpha_{j,i} \f\big({u}_j^{+}(a_j,t)\big)$ being prescribed as the outflow of traffic from $I_i$.

Now we can formulate the DG method for the simple network with one junction using the numerical fluxes defined in (\ref{num_flux_out}) and (\ref{num_flux_in}). Then the case of general networks is a straightforward generalization, where the aforementioned construction of numerical fluxes at junctions is applied on each junction separately.

We consider the DG formulation (\ref{DG_Weak}) on every incoming and outgoing road represented by the intervals $(a_i,b_i), i=1,\ldots,n$ and $(a_j,b_j), j=n+1,\ldots,n+m$, respectively. Since the DG method is applied on finite intervals, we replace the endpoints at $\pm\infty$ from Definition \ref{Def_network} by artificial inflow/outflow boundaries at finite points along with inflow Dirichlet data. For every interval $(a_k,b_k), k=1,\ldots,n+m$, we consider a partition $\kT_{hk}$ along with the corresponding discrete space $S_{hk}$. We write the DG formulation directly for the case of LWR models (\ref{LWR_problem}) with unknown density ${u}$ and flux $\f({u})$.

\begin{definition}[DG formulation on a simple network]\label{def_DG_solution_network}
We seek functions ${u}_{hk}\in\kC^1\left(\left[ 0,T\right];S_{hk}\right)$, $k=	1,\ldots,n+m$ satisfying the following.
\begin{itemize}
\item[(i)] \emph{Incoming roads:} For all $i=1\ldots,n$ and all $\varphi_i\in S_{hi}$
\begin{equation}\label{DG_Weak_inc}
\begin{split}
\int_{a_i}^{b_i} ({u}_{hi})_t\varphi_i\ \dd x-\sum_{K\in\kT_{hi}}\int_K \f({u}_{hi})\varphi_i'\ \dd x+ \sum_{x\in\kF_{hi}^I}H\big({u}_{hi}^{-},{u}_{hi}^{+}\big)\left[\varphi_i\right]&\\ +H_i\varphi_i^{-}(b_i) -H\big({u}_{Di},{u}_{hi}^{+}(a_i)\big)\varphi_i^{+}(a_i)&=0,
\end{split}
\end{equation}
where $H_i=H_i(t)$ is the numerical flux defined in (\ref{num_flux_in}) and ${u}_{Di}$ is the Dirichlet datum corresponding to the left artificial inflow boundary point $a_i$ of $(a_i,b_i)$.
\item[(ii)] \emph{Outgoing roads:} For all $j=n+1,\ldots,n+m$ and all $\varphi_j\in S_{hj}$
\begin{equation}\label{DG_Weak_out}
\begin{split}
\int_{a_j}^{b_j} ({u}_{hj})_t\varphi_j\ \dd x-\sum_{K\in\kT_{hj}}\int_K \f({u}_{hj})\varphi_j'\ \dd x+ \sum_{x\in\kF_{hj}^I}H\big({u}_{hj}^{-},{u}_{hj}^{+}\big)\left[\varphi_j\right]&\\  \quad +H\big({u}_{hj}^{-}(b_j),{u}_{Dj}\big)\varphi_j^{-}(b_j) -H_j\varphi_j^{+}(a_j)&=0,
\end{split}
\end{equation}
where $H_j=H_j(t)$ is the numerical flux defined in (\ref{num_flux_out}).
\end{itemize}
\end{definition}

\begin{remark}
We note that the choice of the arguments in the numerical flux at the artificial boundary point $b_j$ in (\ref{DG_Weak_out}) corresponds to an outflow boundary condition. Typically, ${u}_{Dj}={u}_{hj}^{-}(b_j)$. This term could be rewritten using the original physical flux $\f({u}_{hj}^{-}(b_j))$ due to consistency of the numerical flux $H$.
\end{remark}

Definitions (\ref{num_flux_out}) and (\ref{num_flux_in}) are independent of the specific choice of the numerical flux $H$. In the paper \cite{Prvni_clanek}, we used the Lax-Friedrichs numerical flux, while in this paper we use Godunov's flux. Since the distribution coefficients $\alpha_{j,i}$ are outside of the numerical flux $H$ in (\ref{num_flux_out}) and (\ref{num_flux_in}), we call this construction the \emph{$\alpha$-outside Godunov flux}, as opposed to the following section.

\subsection{$\alpha$-inside Godunov flux}\label{sec_alpha_inside}
When comparing the maximum possible traffic flow (\ref{rov_max_possible_flow}) with the $\alpha$-outside Godunov flux  (\ref{num_flux_in}), we find the main difference in the position of the distribution coefficient. While in (\ref{num_flux_in}) the traffic distribution coefficient is outside of the minimization (\ref{Godunov_flux}) defining the Godunov flux, in (\ref{rov_max_possible_flow}) this coefficient is inside the minimization defining the Godunov-like flux. This leads to the idea of moving the distribution coefficients in (\ref{num_flux_out}) and (\ref{num_flux_in}) inside the Godunov numerical fluxes. To this end we define an auxiliary Godunov-like flux with a third variable for the traffic distribution coefficient.

\begin{definition}[Godunov numerical flux with three variables]\label{def_three_variables_Godunov}
The \emph{Godunov numerical flux with three variables} is defined as
\begin{equation}\label{Godunov_flux_3}
H^{God}\left(u^{-},u^{+},\alpha\right)=\min\left\lbrace\alpha f_{in}(u^{-}),f_{out}(u^{+})\right\rbrace,
\end{equation}
where $f_{in}(u^{-})$ and $f_{out}(u^{+})$ are defined as in Definition \ref{def_Alternative_Godunov}.
\end{definition}

The reason why we put the distribution coefficient in front of $f_{in}$, is the representation of the real flow from the incoming road. Only $\alpha_{j,i}f_{in}({u}_i^{-}(b_i,t))$ cars per time unit want to go from incoming road $i$ to outgoing road $j$. So it makes sense to use $\alpha_{j,i}f_{in}$ in the definition of the Godunov flux instead of $f_{in}$ as in (\ref{Godunov_flux}). For simplicity, we drop the superscript ``$God$" and define the numerical flux with three variables $H(\cdotp,\cdotp,\cdotp)$ as the flux (\ref{Godunov_flux_3}) in the rest of this paper.

Now we can take the numerical flux $H_j(t)$ with $\alpha$-inside at the left point of the outgoing road $I_j$, i.e. at the junction, at time $t$ as
\begin{equation}\label{num_flux_inside_out}
H_j(t):=\sum_{i=1}^{n} H\big({u}_{hi}^{-}(b_i,t),{u}_{hj}^{+}(a_j,t),\alpha_{j,i}\big),
\end{equation}
for $j=n+1,\ldots,n+m$. Similarly, we take the numerical flux $H_i(t)$ with $\alpha$-inside at the right point of the incoming road $I_i$, i.e. at the junction, as
\begin{equation}\label{num_flux_inside_in}
H_i(t):=\sum_{j=n+1}^{n+m} H\big({u}_{hi}^{-}(b_i,t),{u}_{hj}^{+}(a_j,t),\alpha_{j,i}\big),
\end{equation}
for $i=1,\ldots,n$. We can use the same DG formulation as in Definition \ref{def_DG_solution_network} with $H_i$ defined as (\ref{num_flux_inside_in}) and $H_j$ defined as (\ref{num_flux_inside_out}).

\subsection{One incoming and two outgoing roads}
We use the same example as in Section \ref{Section_MPTF_example} with one incoming and two outgoing road and compare all three approaches. Our aim is to identify and discuss the differences between them and describe their behavior. For simplicity, we use the notation $f_{in}^{(1)}:=f_{in}({u}_1^{-}(b_1,t))$, $f_{out}^{(2)}:=f_{out}({u}_2^{+}(a_2,t))$ and $f_{out}^{(3)}:=f_{out}({u}_3^{+}(a_3,t))$.

In the case of the $\alpha$-outside Godunov flux, we calculate the inflow to the junction from the incoming road as
\begin{equation}\label{rov_example_alpha_out}
H_1(t)=\alpha \min\left\lbrace f_{in}^{(1)},f_{out}^{(2)}\right\rbrace + \left(1-\alpha\right) \min\left\lbrace f_{in}^{(1)},f_{out}^{(3)}\right\rbrace.
\end{equation}
We compare this flux value to that obtained by the maximum possible flow (\ref{rov_max_possible_flow}). At first glance equations (\ref{rov_max_possible_flow}) and (\ref{rov_example_alpha_out}) seem completely different, cf. Table \ref{tab_comparison}. On the other hand, if $f_{in}$ is less than both $f_{out}^{(2)}$ and $f_{out}^{(3)}$ then equations (\ref{rov_max_possible_flow}) and (\ref{rov_example_alpha_out}) give the same value $f_{in}$. The outflows from the junction are the individual terms in the right hand side of (\ref{rov_example_alpha_out}). Hence, $H_2(t)=\alpha \min\lbrace f_{in}^{(1)},f_{out}^{(2)}\rbrace$ and $H_3(t)=(1-\alpha) \min\lbrace f_{in}^{(1)},f_{out}^{(3)}\rbrace$. If we compare $H_2$ and $H_3$ from the maximum possible flow and $\alpha$-outside Godunov flux, we can see the distribution coefficient $\alpha$ and $1-\alpha$ in front of the minimization in both cases. Again, if $f_{in}$ is less than both $f_{out}^{(2)}$ and $f_{out}^{(3)}$, both of the approaches give the same values of $H_2$ and $H_3$. But if the minimizer is $f_{out}^{(2)}$ and $f_{out}^{(2)}$ divided by the corresponding distribution coefficient, respectively, then the maximum possible flow gives us $H_2=f_{out}^{(2)}$ while $\alpha$-outside Godunov gives us $H_2=\alpha f_{out}^{(2)}$, which is a lower flux value than the maximum possible traffic flow. We can sum up these considerations informally in the statement that if the outgoing roads are emptier than the incoming road then the maximum possible traffic flow and the  $\alpha$-outside Godunov flux coincide. Once one of the outgoing roads is fuller than the incoming one, the two approaches differ, the latter one giving a smaller traffic flux. 

In the case of the $\alpha$-inside Godunov flux, we calculate the inflow to the junction from the incoming road as
\begin{equation}\label{rov_example_alpha_in}
H_1(t)=\min\left\lbrace\alpha f_{in}^{(1)},f_{out}^{(2)}\right\rbrace + \min\left\lbrace (\left(1-\alpha\right)f_{in}^{(1)},f_{out}^{(3)}\right\rbrace.
\end{equation}
This approach is somewhere between the $\alpha$-outside Godunov flux and maximum possible flow, see Table \ref{tab_comparison}. Again, if $f_{in}$ is less than both $f_{out}^{(2)}$ and $f_{out}^{(3)}$, equations (\ref{rov_max_possible_flow}), (\ref{rov_example_alpha_out}) and (\ref{rov_example_alpha_in}) give us same value. But if one of the $f_{out}$ is the minimizer, we typically get three different values. Also in this case the outflows from the junction are the individual terms in (\ref{rov_example_alpha_in}), hence $H_2(t)=\min\lbrace\alpha f_{in}^{(1)},f_{out}^{(2)}\rbrace$ and $H_3(t)=\min\lbrace (1-\alpha)f_{in}^{(1)},f_{out}^{(3)}\rbrace$. Here is the main difference between the $\alpha$-outside and $\alpha$-inside Godunov fluxes. Of course, when $f_{in}$ is less than both $f_{out}^{(2)}$ and $f_{out}^{(3)}$, we get the same values. But if $f_{out}^{(2)}$ is the minimizer, $\alpha$-inside Godunov gives us the same $H_2$ as the maximum possible flow ($H_1$ and $H_3$ are typically different). This is why we say that $\alpha$-inside Godunov lies between the other two approaches and takes positives from both.

\begin{table}[t!]\centering
\begin{tabular}{c||c|c|c}
\toprule
Approach & $H_1$ & $H_2$ & $H_3$ \\
\hline
\midrule
Maximum & \multirow{2}{*}{$\min\lbrace f_{in}^{(1)},\frac{f_{out}^{(2)}}{\alpha},\frac{f_{out}^{(3)}}{1-\alpha}\rbrace$} & \multirow{2}{*}{$\alpha\min\lbrace f_{in}^{(1)},\frac{f_{out}^{(2)}}{\alpha},\frac{f_{out}^{(3)}}{1-\alpha}\rbrace$} & \multirow{2}{*}{$(1-\alpha)\min\lbrace f_{in}^{(1)},\frac{f_{out}^{(2)}}{\alpha},\frac{f_{out}^{(3)}}{1-\alpha}\rbrace$} \\
possible & & & \\
\hline
$\alpha$-outside & \multirow{2}{*}{\begin{footnotesize}$\alpha\min\lbrace f_{in}^{(1)},f_{out}^{(2)}\rbrace+(1-\alpha)\min\lbrace f_{in}^{(1)},f_{out}^{(3)}\rbrace$\end{footnotesize}} & \multirow{2}{*}{$\alpha\min\lbrace f_{in}^{(1)},f_{out}^{(2)}\rbrace$} & \multirow{2}{*}{$(1-\alpha)\min\lbrace f_{in}^{(1)},f_{out}^{(3)}\rbrace$} \\
Godunov & & & \\
\hline
$\alpha$-inside & \multirow{2}{*}{\begin{footnotesize}$\min\lbrace\alpha f_{in}^{(1)},f_{out}^{(2)}\rbrace+\min\lbrace (1-\alpha)f_{in}^{(1)},f_{out}^{(3)}\rbrace$\end{footnotesize}} & \multirow{2}{*}{$\min\lbrace\alpha f_{in}^{(1)},f_{out}^{(2)}\rbrace$} & \multirow{2}{*}{$\min\lbrace (1-\alpha)f_{in}^{(1)},f_{out}^{(3)}\rbrace$} \\
Godunov & & & \\
\hline
\end{tabular}
\caption{Comparison of the three approaches on the example with one incoming and two outgoing roads.}\label{tab_comparison}
\end{table}

\section{Properties}
\label{sec:5}
In this section, we look at the basic properties of the numerical fluxes at junctions that we considered in Section \ref{sec_numerical_fluxes_junctions}. Namely, the Rankine-Hugoniot conditions and the satisfaction of the traffic distribution according to the coefficients in the traffic distribution matrix.

\subsection{Rankine-Hugoniot condition}
First, we show that our Godunov-based fluxes satisfy the discrete analogues of the Rankine-Hugoniot condition (\ref{Junction_equation}), which leads to conservativity of the resulting DG scheme.

\begin{lemma}[Discrete Rankine--Hugoniot condition for $\alpha$-outside Godunov flux]\label{Lemma_RankineHugoniot_a-out}
The numerical fluxes (\ref{num_flux_out}) and (\ref{num_flux_in}) with $\alpha$ outside satisfy the discrete version of the Rankine--Hugoniot condition (\ref{Junction_equation}):
\[
\sum_{i=1}^n H_i(t)=\sum_{j=n+1}^{n+m} H_j(t).
\]
\end{lemma}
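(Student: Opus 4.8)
The plan is to observe that this identity is, at bottom, nothing more than interchanging the order of a finite double sum, so the proof should be short and essentially computational. I would start by substituting the definitions: by \eqref{num_flux_in}, each $H_i(t)=\sum_{j=n+1}^{n+m}\alpha_{j,i}H\big(u_{hi}^{-}(b_i,t),u_{hj}^{+}(a_j,t)\big)$, hence
\[
\sum_{i=1}^{n}H_i(t)=\sum_{i=1}^{n}\sum_{j=n+1}^{n+m}\alpha_{j,i}\,H\big(u_{hi}^{-}(b_i,t),u_{hj}^{+}(a_j,t)\big).
\]
Since both index sets are finite, I would then swap the two sums to obtain
\[
\sum_{i=1}^{n}H_i(t)=\sum_{j=n+1}^{n+m}\sum_{i=1}^{n}\alpha_{j,i}\,H\big(u_{hi}^{-}(b_i,t),u_{hj}^{+}(a_j,t)\big)=\sum_{j=n+1}^{n+m}H_j(t),
\]
where the last equality is exactly the definition \eqref{num_flux_out} of $H_j(t)$.

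I would emphasize in the write-up that the argument uses neither the normalization $\sum_{j=n+1}^{n+m}\alpha_{j,i}=1$ from \eqref{sum_alpha} nor any particular structural property of the Godunov flux $H(\cdot,\cdot)$: the single term $\alpha_{j,i}H\big(u_{hi}^{-}(b_i,t),u_{hj}^{+}(a_j,t)\big)$ is the common summand appearing in both $H_i(t)$ and $H_j(t)$, and summing over $i$ first or over $j$ first simply produces the two sides of the claimed discrete Rankine--Hugoniot relation. This also makes transparent why the same proof will carry over verbatim to the $\alpha$-inside variant, with the summand $H\big(u_{hi}^{-}(b_i,t),u_{hj}^{+}(a_j,t),\alpha_{j,i}\big)$ in place of $\alpha_{j,i}H(\cdot,\cdot)$.

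There is no real obstacle here: the only thing to be careful about is bookkeeping of the index ranges (incoming edges $1,\dots,n$ versus outgoing edges $n+1,\dots,n+m$) and making sure the spatial/temporal arguments of $H$ are carried along unchanged through the swap. If desired, I would close by remarking that this lemma, combined with the DG formulation in Definition \ref{def_DG_solution_network}, yields global conservativity of the scheme over the whole network, since the internal junction contributions telescope.
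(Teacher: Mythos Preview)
Your proposal is correct and matches the paper's proof essentially line for line: substitute the definitions \eqref{num_flux_in} and \eqref{num_flux_out}, interchange the finite double sum, and recognize the result. Your additional remarks (that neither the normalization \eqref{sum_alpha} nor any special property of $H$ is used, and that the same swap proves the $\alpha$-inside version) are accurate and worth keeping.
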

\begin{proof}
From the definition of $H_i$ and $H_j$, we immediately obtain
\begin{align*}
\sum_{i=1}^n H_i(t)&=\sum_{i=1}^n\sum_{j=n+1}^{n+m} \alpha_{j,i} H\big({u}_{hi}^{-}(b_i,t),{u}_{hj}^{+}(a_j,t)\big)\\
&=\sum_{j=n+1}^{n+m}\sum_{i=1}^n \alpha_{j,i} H\big({u}_{hi}^{-}(b_i,t),{u}_{hj}^{+}(a_j,t)\big)=\sum_{j=n+1}^{n+m} H_j(t).
\end{align*}
\end{proof}

\begin{lemma}[Discrete Rankine--Hugoniot condition for $\alpha$-inside Godunov flux]\label{Lemma_RankineHugoniot_a-in}
The numerical fluxes (\ref{num_flux_inside_out}) and (\ref{num_flux_inside_in}) with $\alpha$ inside satisfy the discrete version of the Rankine--Hugoniot condition (\ref{Junction_equation}):
\[
\sum_{i=1}^n H_i(t)=\sum_{j=n+1}^{n+m} H_j(t).
\]
\end{lemma}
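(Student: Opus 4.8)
The plan is to mimic the proof of Lemma \ref{Lemma_RankineHugoniot_a-out} verbatim, since the only structural fact being used there is that summing over $i$ first and then $j$ gives the same result as summing over $j$ first and then $i$ — a trivial interchange of two finite sums. The specific form of the summands is irrelevant to that argument.

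Concretely, I would start from the definition \eqref{num_flux_inside_in} of $H_i(t)$, substitute it into $\sum_{i=1}^n H_i(t)$, and write out the resulting double sum $\sum_{i=1}^n\sum_{j=n+1}^{n+m} H\big({u}_{hi}^{-}(b_i,t),{u}_{hj}^{+}(a_j,t),\alpha_{j,i}\big)$. Then I would swap the order of summation to get $\sum_{j=n+1}^{n+m}\sum_{i=1}^n H\big({u}_{hi}^{-}(b_i,t),{u}_{hj}^{+}(a_j,t),\alpha_{j,i}\big)$, and recognize the inner sum as exactly the definition \eqref{num_flux_inside_out} of $H_j(t)$, yielding $\sum_{j=n+1}^{n+m} H_j(t)$.

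There is essentially no obstacle here: the interchange is justified because both index sets are finite, so no convergence or regularity considerations enter. Unlike the analysis of the traffic distribution error later in the paper, this statement does not depend on the placement of the coefficient $\alpha_{j,i}$ (inside versus outside the Godunov minimization) — the key point is simply that in both \eqref{num_flux_inside_in} and \eqref{num_flux_inside_out} the summand is the \emph{same} three-variable Godunov flux $H\big({u}_{hi}^{-}(b_i,t),{u}_{hj}^{+}(a_j,t),\alpha_{j,i}\big)$ evaluated at the pair $(i,j)$, just summed over the complementary index. I would write the proof in two displayed lines exactly as in Lemma \ref{Lemma_RankineHugoniot_a-out}, and note in one sentence that conservativity of the DG scheme on the whole network follows by applying this identity at each junction.

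Proof. From the definition of $H_i$ in \eqref{num_flux_inside_in} and of $H_j$ in \eqref{num_flux_inside_out}, we immediately obtain
\begin{align*}
\sum_{i=1}^n H_i(t)&=\sum_{i=1}^n\sum_{j=n+1}^{n+m} H\big({u}_{hi}^{-}(b_i,t),{u}_{hj}^{+}(a_j,t),\alpha_{j,i}\big)\\
&=\sum_{j=n+1}^{n+m}\sum_{i=1}^n H\big({u}_{hi}^{-}(b_i,t),{u}_{hj}^{+}(a_j,t),\alpha_{j,i}\big)=\sum_{j=n+1}^{n+m} H_j(t),
\end{align*}
where we have merely interchanged the order of the two finite sums.
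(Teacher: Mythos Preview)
Your proof is correct and is essentially identical to the paper's own argument: substitute the definitions \eqref{num_flux_inside_in} and \eqref{num_flux_inside_out}, then interchange the two finite sums. The additional commentary you give (that the placement of $\alpha_{j,i}$ is irrelevant here, only the symmetry of the summand matters) is accurate and well observed.
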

\begin{proof}
From the definition of $H_i$ and $H_j$, we immediately obtain
\begin{align*}
\sum_{i=1}^n H_i(t)&=\sum_{i=1}^n\sum_{j=n+1}^{n+m} H\big({u}_{hi}^{-}(b_i,t),{u}_{hj}^{+}(a_j,t),\alpha_{j,i}\big)\\
&=\sum_{j=n+1}^{n+m}\sum_{i=1}^n H\big({u}_{hi}^{-}(b_i,t),{u}_{hj}^{+}(a_j,t),\alpha_{j,i}\big)=\sum_{j=n+1}^{n+m} H_j(t).
\end{align*}
\end{proof}

The previous two lemmas allow us to prove that for the DG scheme, the total number of vehicles in the network is conserved (modulo inlet and outlet boundary conditions), which is the basic property we expect from the conservation laws. Since the DG scheme is naturally conservative on each individual road, the question boils down to conservativity of the scheme at the junction.

\begin{corollary}[Conservation property of the DG scheme]\label{Corollary_Conservation_property}
Let ${u}_{Dj}={u}_{hj}^{-}(b_j)$. The DG scheme from Definition \ref{def_DG_solution_network} conserves the total number of vehicles in the network in the sense that
\begin{equation}
\frac{\dd}{\dd t}\sum_{k=1}^{n+m}\int_{a_k}^{b_k}{u}_{hk}\,\dd x= \sum_{i=1}^n H\big({u}_{Di},{u}_{hi}^{+}(a_i)\big) -\sum_{j=n+1}^{n+m} H\big({u}_{hj}^{-}(b_j),{u}_{Dj}\big)
\nonumber
\end{equation}
for both $\alpha$--outside and $\alpha$--inside Godunov fluxes.
\end{corollary}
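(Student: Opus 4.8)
The plan is to test the DG formulation of Definition~\ref{def_DG_solution_network} with the constant function $\varphi_k\equiv 1$ on each road $k=1,\ldots,n+m$ --- an admissible choice since constants belong to every $S_{hk}$ --- and to sum the resulting identities over all roads. First I would substitute $\varphi_i\equiv 1$ into (\ref{DG_Weak_inc}): the volume term $\sum_{K}\int_K\f({u}_{hi})\varphi_i'\,\dd x$ vanishes because $\varphi_i'=0$, and every interior jump satisfies $[\varphi_i]_x=\varphi_i^{-}(x)-\varphi_i^{+}(x)=1-1=0$, so the sum over $\kF_{hi}^I$ disappears as well. Using the boundary conventions $\varphi_i^{-}(b_i)=\varphi_i^{+}(a_i)=1$ from Section~\ref{sec:2}, what is left is the per-road balance
\[
\frac{\dd}{\dd t}\int_{a_i}^{b_i}{u}_{hi}\,\dd x=H\big({u}_{Di},{u}_{hi}^{+}(a_i)\big)-H_i(t),\qquad i=1,\ldots,n,
\]
and similarly, plugging $\varphi_j\equiv 1$ into (\ref{DG_Weak_out}),
\[
\frac{\dd}{\dd t}\int_{a_j}^{b_j}{u}_{hj}\,\dd x=H_j(t)-H\big({u}_{hj}^{-}(b_j),{u}_{Dj}\big),\qquad j=n+1,\ldots,n+m.
\]

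Summing these $n+m$ equations, the left-hand sides add up to $\frac{\dd}{\dd t}\sum_{k=1}^{n+m}\int_{a_k}^{b_k}{u}_{hk}\,\dd x$, while the right-hand side becomes
\[
\sum_{i=1}^{n}H\big({u}_{Di},{u}_{hi}^{+}(a_i)\big)-\sum_{j=n+1}^{n+m}H\big({u}_{hj}^{-}(b_j),{u}_{Dj}\big)-\sum_{i=1}^{n}H_i(t)+\sum_{j=n+1}^{n+m}H_j(t).
\]
Here I would invoke the discrete Rankine--Hugoniot condition: Lemma~\ref{Lemma_RankineHugoniot_a-out} for the $\alpha$-outside flux and Lemma~\ref{Lemma_RankineHugoniot_a-in} for the $\alpha$-inside flux. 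In either case $\sum_{i=1}^{n}H_i(t)=\sum_{j=n+1}^{n+m}H_j(t)$, so the last two sums cancel and precisely the asserted identity remains; this is the only place where the two flux variants are treated differently, everything else being literally the same argument. The assumption ${u}_{Dj}={u}_{hj}^{-}(b_j)$ is the natural outflow prescription at the artificial outlet boundary and is used only so that the right-hand side is expressed through the interior trace (as noted in the remark following Definition~\ref{def_DG_solution_network}).

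I do not expect any genuine obstacle: the substantive content has already been extracted into Lemmas~\ref{Lemma_RankineHugoniot_a-out} and~\ref{Lemma_RankineHugoniot_a-in}, and what remains is a one-line summation. The only point that needs mild care is bookkeeping with the boundary jump conventions of Section~\ref{sec:2} (in particular $[\varphi]_a=-\varphi^{+}(a)$, $[\varphi]_b=\varphi^{-}(b)$), so that the explicit boundary flux terms in (\ref{DG_Weak_inc})--(\ref{DG_Weak_out}) enter with the correct signs; once these are tracked, the corollary follows immediately.
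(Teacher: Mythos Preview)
Your proposal is correct and follows essentially the same approach as the paper: choose $\varphi_k\equiv 1$ on every road, sum the resulting identities, and use Lemma~\ref{Lemma_RankineHugoniot_a-out} or~\ref{Lemma_RankineHugoniot_a-in} to cancel the junction flux sums. You spell out a few intermediate steps (vanishing of the volume and interior-jump terms, the per-road balances) that the paper leaves implicit, but the argument is identical.
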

\begin{proof}
We set all test functions as $\varphi_k\equiv 1$ for all $k=1,\ldots,n+m$ and sum together all of the equations (\ref{DG_Weak_inc}) and (\ref{DG_Weak_out}) for all $i$ and $j$. We get
\begin{equation}
\begin{split}
\frac{\dd}{\dd t}\sum_{k=1}^{n+m}&\int_{a_k}^{b_k}{u}_{hk}\,\dd x +\sum_{i=1}^n H_i -\sum_{j=n+1}^{n+m} H_j +\sum_{j=n+1}^{n+m} H\big({u}_{hj}^{-}(b_j),{u}_{Dj}\big) -\sum_{i=1}^n H\big({u}_{Di},{u}_{hi}^{+}(a_i)\big) =0.
\end{split}
\nonumber
\end{equation}
The second and third terms cancel one another since $\sum_i H_i-\sum_j H_j=0$ due to Lemma \ref{Lemma_RankineHugoniot_a-out} or \ref{Lemma_RankineHugoniot_a-in}. This completes the proof.
\end{proof}

\subsection{Traffic distribution error} \label{Section_Distribution_Error}
The purpose of the traffic distribution coefficients $\alpha_{j,i}$ from Definition \ref{def:Traffic_distribution_matrix} is to describe the preferences of the drivers from each incoming road for which outgoing road they want to take. In other words, how the traffic from each incoming road is distributed among the outgoing roads. In the end this means we want to satisfy condition (ii) from Definition \ref{def_network_solution} in terms of the numerical fluxes at the junction:
\begin{equation}
\label{traffic_distribution_exact}
H_j(t)=\sum_{i=1}^n \alpha_{j,i} H_i(t),\quad j=n+1,\ldots,n+m
\end{equation}
The maximum possible flux approach from \cite{Networks} is based on maximizing the total flux through the junction while satisfying (\ref{traffic_distribution_exact}) exactly under any circumstances. In this section we show that in the case of our Godunov-based fluxes, this relation is satisfied if, loosely speaking, the junction is not congested. In other words, whenever the outgoing roads can accept the incoming traffic, the drivers drive according to their original preferences. If any of the outgoing roads cannot accept the incoming traffic, relation (\ref{traffic_distribution_exact}) is no longer satisfied exactly, but with a small error (\emph{traffic distribution error}). We interpret this as a natural behavior of human drivers -- when one of the preferred outgoing roads is full, some drivers will change their original preferences and take a different route. In this context, strictly adhering to (\ref{traffic_distribution_exact}) irrespective of the traffic situation would correspond to non-human drivers, e.g. autonomous vehicles with a prescribed course which cannot be changed.

Another interpretation is the presence of dedicated turning lanes in front of the junction. These allow other cars to pass the standing vehicles which want to go to a congested outgoing road. In a single-lane road, even one standing vehicle can block the whole road if it cannot proceed to its desired outgoing road. Such a vehicle blocks the way for other drivers, even those who could otherwise proceed since their desired outgoing roads are free. This was discussed in Remark \ref{rem:blocked_junction} for the maximum possible flux, where a single congested road blocks the entire junction.

In this section, we analyze when the traffic distribution condition (\ref{traffic_distribution_exact}) is satisfied exactly for our Godunov-based fluxes. In the following lemmas, we express the traffic distribution error for these fluxes.

\begin{lemma}[Traffic distribution error for $\alpha$--outside Godunov]\label{theorem_distribution_out}
The numerical fluxes (\ref{num_flux_out}) and (\ref{num_flux_in}) with $\alpha$ outside satisfy
\[
H_j(t)=\sum_{i=1}^n \alpha_{j,i} H_i(t) +E_j(t)
\]
for all $j=n+1,\ldots,n+m$, where the error term is
\begin{equation}\label{rov_traffic_distribution_error_out}
E_j(t)=\sum_{i=1}^n\sum_{\substack{l=n+1\\l\neq j}}^{n+m} \alpha_{j,i}\alpha_{l,i} \big(H_{i,j}(t) -H_{i,l}(t)\big),
\end{equation}
where $H_{i,j}(t):=H\big({u}_{hi}^{-}(b_i,t),{u}_{hj}^{+}(a_j,t)\big)$.
\end{lemma}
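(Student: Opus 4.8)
The plan is to compute the difference $H_j(t) - \sum_{i=1}^n \alpha_{j,i} H_i(t)$ directly from the definitions (\ref{num_flux_out}) and (\ref{num_flux_in}) and show it equals the claimed expression $E_j(t)$. Using the shorthand $H_{i,j}(t) := H\big({u}_{hi}^{-}(b_i,t),{u}_{hj}^{+}(a_j,t)\big)$, the outgoing flux is $H_j(t) = \sum_{i=1}^n \alpha_{j,i} H_{i,j}(t)$, while the incoming flux is $H_i(t) = \sum_{l=n+1}^{n+m} \alpha_{l,i} H_{i,l}(t)$. So $\sum_{i=1}^n \alpha_{j,i} H_i(t) = \sum_{i=1}^n \sum_{l=n+1}^{n+m} \alpha_{j,i}\alpha_{l,i} H_{i,l}(t)$.

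The key algebraic step is to use the normalization $\sum_{l=n+1}^{n+m}\alpha_{l,i} = 1$ from (\ref{sum_alpha}) to rewrite $H_j(t)$ in a compatible doubly-indexed form. First I would write
\[
H_j(t) = \sum_{i=1}^n \alpha_{j,i} H_{i,j}(t) = \sum_{i=1}^n \alpha_{j,i}\Big(\sum_{l=n+1}^{n+m}\alpha_{l,i}\Big) H_{i,j}(t) = \sum_{i=1}^n \sum_{l=n+1}^{n+m}\alpha_{j,i}\alpha_{l,i} H_{i,j}(t).
\]
Then subtracting,
\[
H_j(t) - \sum_{i=1}^n \alpha_{j,i} H_i(t) = \sum_{i=1}^n \sum_{l=n+1}^{n+m}\alpha_{j,i}\alpha_{l,i}\big(H_{i,j}(t) - H_{i,l}(t)\big).
\]
Finally, the $l=j$ term in the inner sum vanishes identically since $H_{i,j}(t) - H_{i,j}(t) = 0$, so the sum over $l$ can be restricted to $l \neq j$, yielding exactly (\ref{rov_traffic_distribution_error_out}).

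There is no real obstacle here — the proof is a short manipulation of finite sums, with the only ``trick'' being the insertion of the factor $\sum_l \alpha_{l,i} = 1$ to match the index structure of $\sum_i \alpha_{j,i} H_i$. The one point worth stating explicitly is that dropping the $l=j$ term is legitimate precisely because that term is zero, not because of any cancellation among other terms; this is what makes the stated form of $E_j(t)$ meaningful (e.g., it shows $E_j$ vanishes when all the relevant Godunov fluxes $H_{i,l}$ agree, which is the ``uncongested'' regime discussed in the text). I would present the computation as a single displayed \texttt{align*} block and conclude.
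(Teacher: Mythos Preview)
Your proposal is correct and follows essentially the same route as the paper: both arguments expand the difference $H_j(t)-\sum_i\alpha_{j,i}H_i(t)$ using the definitions of $H_j$ and $H_i$, insert the normalization $\sum_{l}\alpha_{l,i}=1$ from (\ref{sum_alpha}) to match the index structures, and then drop the vanishing $l=j$ term. The only cosmetic difference is the order of presentation (you insert the factor of $1$ into $H_j$ before subtracting, whereas the paper first names the difference $E_j$ and then expands it).
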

\begin{proof}
By definition (\ref{num_flux_out}),
\[
H_j(t)=\sum_{i=1}^n\alpha_{j,i}H_{i,j}(t) =\sum_{i=1}^n\alpha_{j,i}H_{i}(t) +\underbrace{\sum_{i=1}^n\alpha_{j,i}\big(H_{i,j}(t)-H_i(t)\big)}_{E_j(t)},
\]
where $E_j(t)$ is the error term which we will show has the form (\ref{rov_traffic_distribution_error_out}): By Definition (\ref{num_flux_in}), we have
\begin{equation*}
\begin{split}
E_j(t)&= \sum_{i=1}^n\alpha_{j,i}\Big(H_{i,j}(t)-\sum_{l=n+1}^{n+m}\alpha_{l,i}H_{i,l}(t)\Big)= \sum_{i=1}^n\alpha_{j,i}\sum_{l=n+1}^{n+m}\alpha_{l,i}\big(H_{i,j}(t)-H_{i,l}(t)\big)
\\ &=\sum_{i=1}^n\sum_{\substack{l=n+1\\l\neq j}}^{n+m} \alpha_{j,i}\alpha_{l,i} \big(H_{i,j}(t) -H_{i,l}(t)\big),
\end{split}
\end{equation*}
since $\sum_{l=n+1}^{n+m}\alpha_{l,i}=1$ due to (\ref{sum_alpha}). This completes the proof.
\qed\end{proof}

\begin{lemma}[Traffic distribution error for $\alpha$--inside Godunov]\label{theorem_distribution_in}
The numerical fluxes (\ref{num_flux_inside_out}) and (\ref{num_flux_inside_in}) with $\alpha$ inside satisfy
\[
H_j(t)=\sum_{i=1}^n \alpha_{j,i} H_i(t) +E_j(t)
\]
for all $j=n+1,\ldots,n+m$, where the error term is
\begin{equation}\label{rov_traffic_distribution_error_in}
E_j(t)=\sum_{i=1}^n\sum_{\substack{l=n+1\\l\neq j}}^{n+m} \left(\alpha_{l,i} H_{i,j}(t) -\alpha_{j,i} H_{i,l}(t)\right),
\end{equation}
where $H_{i,j}(t):=H\big({u}_{hi}^{-}(b_i,t),{u}_{hj}^{+}(a_j,t),\alpha_{j,i}\big)$.
\end{lemma}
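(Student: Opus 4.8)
The plan is to mimic the proof of Lemma \ref{theorem_distribution_out} almost verbatim, replacing the $\alpha$-outside definitions by the $\alpha$-inside ones. First I would start from definition (\ref{num_flux_inside_out}) of $H_j(t)$ and the abbreviation $H_{i,j}(t)=H\big({u}_{hi}^{-}(b_i,t),{u}_{hj}^{+}(a_j,t),\alpha_{j,i}\big)$, so that
\[
H_j(t)=\sum_{i=1}^n H_{i,j}(t).
\]
Then I would add and subtract $\sum_{i=1}^n\alpha_{j,i}H_i(t)$, obtaining $H_j(t)=\sum_{i=1}^n\alpha_{j,i}H_i(t)+E_j(t)$ with $E_j(t):=\sum_{i=1}^n\big(H_{i,j}(t)-\alpha_{j,i}H_i(t)\big)$; this is just the definition of the error term and requires no computation.

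Next I would substitute the definition (\ref{num_flux_inside_in}) of $H_i(t)=\sum_{l=n+1}^{n+m}H_{i,l}(t)$ into $E_j(t)$, giving
\[
E_j(t)=\sum_{i=1}^n\Big(H_{i,j}(t)-\alpha_{j,i}\sum_{l=n+1}^{n+m}H_{i,l}(t)\Big).
\]
The key algebraic step is to rewrite the bare term $H_{i,j}(t)$ using (\ref{sum_alpha}): since $\sum_{l=n+1}^{n+m}\alpha_{l,i}=1$ for each fixed $i$, we have $H_{i,j}(t)=\sum_{l=n+1}^{n+m}\alpha_{l,i}H_{i,j}(t)$. Plugging this in and combining the two sums over $l$ yields
\[
E_j(t)=\sum_{i=1}^n\sum_{l=n+1}^{n+m}\big(\alpha_{l,i}H_{i,j}(t)-\alpha_{j,i}H_{i,l}(t)\big).
\]
Finally I would observe that the $l=j$ term vanishes, since it equals $\alpha_{j,i}H_{i,j}(t)-\alpha_{j,i}H_{i,j}(t)=0$, which lets us restrict the inner sum to $l\neq j$ and gives exactly (\ref{rov_traffic_distribution_error_in}).

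I do not expect any genuine obstacle here: the argument is purely algebraic bookkeeping, and the only subtlety — the one also present in the $\alpha$-outside case — is that the identity $\sum_l\alpha_{l,i}=1$ must be applied to the \emph{bare} $H_{i,j}$ term rather than, as one might first guess, being factored out of a product of $\alpha$'s; in the $\alpha$-inside case there is only a single distribution coefficient inside each $H_{i,\cdot}$, so the error term (\ref{rov_traffic_distribution_error_in}) is linear in the $\alpha_{j,i},\alpha_{l,i}$ rather than bilinear as in (\ref{rov_traffic_distribution_error_out}). One should also note, for consistency with the surrounding text, that the symbol $H_{i,j}(t)$ is being redefined in this lemma to carry the third argument $\alpha_{j,i}$, which the statement already makes explicit.
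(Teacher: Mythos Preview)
Your proposal is correct and follows essentially the same approach as the paper's own proof: start from $H_j=\sum_i H_{i,j}$, define $E_j$ by adding and subtracting $\sum_i\alpha_{j,i}H_i$, substitute $H_i=\sum_l H_{i,l}$, and use (\ref{sum_alpha}) to rewrite $H_{i,j}$ as $\sum_l\alpha_{l,i}H_{i,j}$ so that the $l=j$ term cancels. If anything, you spell out the use of (\ref{sum_alpha}) on the bare $H_{i,j}$ term more explicitly than the paper does.
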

\begin{proof}
By definition (\ref{num_flux_inside_out}),
\[
H_j(t)=\sum_{i=1}^n H_{i,j}(t) =\sum_{i=1}^n\alpha_{j,i}H_{i}(t) +\underbrace{\sum_{i=1}^n\big(H_{i,j}(t)-\alpha_{j,i} H_i(t)\big)}_{E_j(t)},
\]
where $E_j(t)$ is the error term which we will show has the form (\ref{rov_traffic_distribution_error_in}): By Definition (\ref{num_flux_inside_in}), we have
\begin{equation*}
E_j(t)= \sum_{i=1}^n\Big(H_{i,j}(t)-\alpha_{j,i}\sum_{l=n+1}^{n+m}H_{i,l}(t)\Big)=\sum_{i=1}^n\sum_{\substack{l=n+1\\l\neq j}}^{n+m} \big(\alpha_{l,i} H_{i,j}(t) -\alpha_{j,i} H_{i,l}(t)\big),
\end{equation*}
since $\sum_{l=n+1}^{n+m}\alpha_{l,i}=1$ due to (\ref{sum_alpha}). This completes the proof.
\end{proof}

Now we discuss general situations when the traffic distribution errors are zero. We note that we have also analyzed the traffic distribution errors in our previous paper \cite{Prvni_clanek}, where the numerical fluxes at the junction were based on the Lax-Friedrichs flux instead of Godunov. In that case, the errors were in general always nonzero but small. We therefore view the Godunov-like approach of this paper as more natural, since the traffic distribution relation (\ref{traffic_distribution_exact}) is satisfied exactly in many situation. Namely, in Theorem \ref{theorem_distribution2_in} we show that if the density on all the outgoing roads is smaller than $u_*$, i.e. the density where the traffic flow is maximal, then the traffic is distributed according to (\ref{traffic_distribution_exact}).

In the following, we shall consider a fixed time $t$ and omit the argument $t$ from the functions ${u}_h, H_{i,j}, E_j$ and similar, in order to simplify the notation.

\begin{theorem}[Zero traffic distribution error] \label{theorem_distribution2_in}
Let ${u}_{hj}^{+}(a_j)\leq u_*$ for all $j\in\lbrace n+1,\ldots,n+m\rbrace$. Then $E_j=0$ for all $j\in\lbrace n+1,\ldots,n+m\rbrace$ for both the $\alpha$--inside and $\alpha$--outside Godunov fluxes. 
\end{theorem}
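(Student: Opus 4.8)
The key observation is that the hypothesis ${u}_{hj}^{+}(a_j)\leq u_*$ forces $f_{out}({u}_{hj}^{+}(a_j))=f(u_*)$, the global maximum of $f$, for every outgoing road $j$. Since $f_{in}(u^{-})\leq f(u_*)$ always holds (by the monotonicity assumption on $f$), and since $\alpha_{j,i}\leq 1$, we get $\alpha_{j,i}f_{in}({u}_{hi}^{-}(b_i))\leq f(u_*)=f_{out}({u}_{hj}^{+}(a_j))$. Hence the minimum in the Godunov flux with three variables (\ref{Godunov_flux_3}) is always attained by the first argument: $H^{God}({u}_{hi}^{-}(b_i),{u}_{hj}^{+}(a_j),\alpha_{j,i})=\alpha_{j,i}f_{in}({u}_{hi}^{-}(b_i))$, independently of $j$. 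Similarly, for the $\alpha$-outside flux, $H({u}_{hi}^{-}(b_i),{u}_{hj}^{+}(a_j))=\min\{f_{in}({u}_{hi}^{-}(b_i)),f(u_*)\}=f_{in}({u}_{hi}^{-}(b_i))$, again independent of $j$.

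First I would treat the $\alpha$-inside case. Write $H_{i,j}=\alpha_{j,i}f_{in}({u}_{hi}^{-}(b_i))$ as established above and plug into the error formula (\ref{rov_traffic_distribution_error_in}): each summand becomes $\alpha_{l,i}\alpha_{j,i}f_{in}^{(i)}-\alpha_{j,i}\alpha_{l,i}f_{in}^{(i)}=0$, where $f_{in}^{(i)}:=f_{in}({u}_{hi}^{-}(b_i))$. So $E_j=0$ term by term. Then the $\alpha$-outside case: here $H_{i,j}=f_{in}^{(i)}$ is independent of $j$, so in (\ref{rov_traffic_distribution_error_out}) the factor $H_{i,j}-H_{i,l}=f_{in}^{(i)}-f_{in}^{(i)}=0$, whence $E_j=0$. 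In both cases the conclusion follows directly once the key simplification of the Godunov flux is in place.

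The only real content is the elementary fact that $f_{in}(u^{-})\leq f(u_*)$ for all $u^{-}$, which is immediate from Definition \ref{def_Alternative_Godunov}: if $u^{-}<u_*$ then $f_{in}(u^{-})=f(u^{-})\leq f(u_*)$ since $u_*$ is the global maximum, and if $u^{-}\geq u_*$ then $f_{in}(u^{-})=f(u_*)$ trivially. There is no genuine obstacle here — the theorem is essentially a one-line consequence of the structure of the flux, and the main thing to be careful about is that the argument is uniform in $j$ (the value $H_{i,j}$, resp. the scaled value $\alpha_{j,i}^{-1}H_{i,j}$ in the inside case, does not depend on which outgoing road $j$ we look at, because all outgoing roads are in the "uncongested" regime where $f_{out}$ is saturated at $f(u_*)$).
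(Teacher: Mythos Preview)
Your proposal is correct and follows essentially the same argument as the paper: under the hypothesis, $f_{out}({u}_{hj}^{+}(a_j))=f(u_*)\ge f_{in}({u}_{hi}^{-}(b_i))\ge\alpha_{j,i}f_{in}({u}_{hi}^{-}(b_i))$, so the inflow term is always the minimizer in the Godunov flux, and substituting into the error formulas (\ref{rov_traffic_distribution_error_out}) and (\ref{rov_traffic_distribution_error_in}) gives zero term by term. If anything, you are slightly more explicit than the paper in justifying the inequality $f_{in}(u^{-})\leq f(u_*)$ from Definition~\ref{def_Alternative_Godunov}.
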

\begin{proof}
For the $\alpha$-outside flux, due to the form (\ref{rov_traffic_distribution_error_out}) of the error term, if $H_{i,j}=H_{i,l}$ for all $i,j,l$, then $E_j=0$. This happens, in general, when the inflow term is the minimizer in (\ref{Godunov_flux}) in both numerical fluxes, i.e. $H_{i,j}=H_{i,l}=f_{in}({u}_{hi}^{-}(b_i))$. The simplest general case when this happens is if $f_{out}({u}_{hj}^{+}(a_j)) =f_{out}({u}_{hl}^{+}(a_l))=f(u_*)\ge f_{in}({u}_{hi}^{-}(b_i))$, or in other words if ${u}_{hj}^{+}(a_j)\leq u_*$ and ${u}_{hl}^{+}(a_l)\leq u_*$ for all $l,j\in\lbrace n+1,\ldots,n+m\rbrace$.

For the $\alpha$-inside flux, due to the form (\ref{rov_traffic_distribution_error_out}) of the error term, if $\alpha_{l,i}H_{i,j}=\alpha_{j,i}H_{i,l}$ for all $i,j,l$, then $E_j=0$. Again,if the inflow term is the minimizer in (\ref{Godunov_flux_3}) in both numerical fluxes, we get $\alpha_{l,i}H_{i,j}=\alpha_{l,i}\alpha_{j,i}f_{in}({u}_{hi}^{-}(b_i)) =\alpha_{j,i}H_{i,l}$. This occurs if $f_{out}({u}_{hj}^{+}(a_j)) =f(u_*)\ge \alpha_{j,i}f_{in}({u}_{hi}^{-}(b_i))$, or in other words if ${u}_{hj}^{+}(a_j)\leq u_*$ for all $j\in\lbrace n+1,\ldots,n+m\rbrace$.
\end{proof}

Theorem \ref{theorem_distribution2_in} states that the traffic is distributed exactly according to the original preferences if the outgoing roads are sufficiently free. The result is valid for both of the presented variants of the Godunov flux. If we take into account the specific form of the flux, where $f(0)=f(u_{\max}=0$, we can perform a finer analysis, which allows heavier traffic through the junction, while still satisfying (\ref{traffic_distribution_exact}) exactly.

\begin{theorem}[Zero traffic distribution error for $\alpha$--outside]\label{theorem_distribution2a_out}
Assume that for each $i\in\lbrace 1,\ldots,n\rbrace$ one of the following conditions is satisfied:
\begin{enumerate}
\item ${u}_{hi}^{-}(b_i)\ge u^*$ and ${u}_{hj}^{+}(a_j)\leq u_*$ for all $j\in\lbrace n+1,\ldots,n+m\rbrace$.
\item  ${u}_{hi}^{-}(b_i)< u^*$ and ${u}_{hj}^{+}(a_j)\leq \tilde{u}$ for all $j\in\lbrace n+1,\ldots,n+m\rbrace$, where $\tilde{u}>u^*$ is such that $f(\tilde{u})=f({u}_{hi}^{-}(b_i))$.
\end{enumerate}
Then $E_j=0$ for all $j\in\lbrace n+1,\ldots,n+m\rbrace$. 
\end{theorem}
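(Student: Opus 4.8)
The plan is to start from the closed form of the error term proved in Lemma~\ref{theorem_distribution_out},
\[
E_j=\sum_{i=1}^n\sum_{\substack{l=n+1\\l\neq j}}^{n+m}\alpha_{j,i}\alpha_{l,i}\big(H_{i,j}-H_{i,l}\big),
\]
where $H_{i,j}=\min\{f_{in}({u}_{hi}^{-}(b_i)),f_{out}({u}_{hj}^{+}(a_j))\}$ by the alternative form~(\ref{Godunov_flux}). It suffices to show that for each fixed incoming road $i$ the value $H_{i,j}$ does not depend on the outgoing index $j\in\{n+1,\ldots,n+m\}$; then every bracket $H_{i,j}-H_{i,l}$ vanishes and hence $E_j=0$. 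Since the hypothesis is phrased per incoming road, I would fix $i$, decide which of the two listed cases applies to it, and treat them separately — the $i$-summands are independent, so a mixture of the two cases over different $i$ causes no difficulty.

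In the first case ${u}_{hi}^{-}(b_i)\ge u_*$, the definition of $f_{in}$ gives $f_{in}({u}_{hi}^{-}(b_i))=f(u_*)$, while ${u}_{hj}^{+}(a_j)\le u_*$ for every $j$ gives $f_{out}({u}_{hj}^{+}(a_j))=f(u_*)$ as well. Hence $H_{i,j}=\min\{f(u_*),f(u_*)\}=f(u_*)$ for all $j$, which is independent of $j$ as required.

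In the second case ${u}_{hi}^{-}(b_i)<u_*$, so $f_{in}({u}_{hi}^{-}(b_i))=f({u}_{hi}^{-}(b_i))$, and the claim reduces to showing $f_{out}({u}_{hj}^{+}(a_j))\ge f({u}_{hi}^{-}(b_i))$ for every $j$, for then the inflow term realizes the minimum and $H_{i,j}=f({u}_{hi}^{-}(b_i))$ for all $j$. I would split on the position of ${u}_{hj}^{+}(a_j)$ relative to $u_*$: if ${u}_{hj}^{+}(a_j)\le u_*$ then $f_{out}=f(u_*)\ge f({u}_{hi}^{-}(b_i))$ because $f$ is non-decreasing on $(-\infty,u_*]$; if $u_*<{u}_{hj}^{+}(a_j)\le\tilde u$ then $f_{out}=f({u}_{hj}^{+}(a_j))\ge f(\tilde u)=f({u}_{hi}^{-}(b_i))$ because $f$ is non-increasing on $[u_*,\infty)$ and $\tilde u$ is exactly the point on the decreasing branch with $f(\tilde u)=f({u}_{hi}^{-}(b_i))$. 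In both sub-cases $H_{i,j}$ is constant in $j$. Combining the two cases over all $i$ yields $E_j=0$ for every $j$.

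The proof is essentially careful bookkeeping of which argument attains the minimum in the Godunov flux; the only genuinely substantive step is the monotonicity comparison in the second case, where the choice of $\tilde u$ as the reflection of ${u}_{hi}^{-}(b_i)$ across $u_*$ onto the decreasing branch of $f$ is precisely what is needed to guarantee $f_{out}\ge f_{in}$ for all admissible outgoing densities. No optimization or fixed-point machinery enters — everything follows from the explicit form~(\ref{Godunov_flux}) and the stated monotonicity of $f$.
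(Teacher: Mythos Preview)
Your proof is correct and follows essentially the same approach as the paper: both arguments use the error formula from Lemma~\ref{theorem_distribution_out} and show that for each fixed $i$ the inflow term $f_{in}({u}_{hi}^{-}(b_i))$ realizes the minimum in~(\ref{Godunov_flux}) for every outgoing index $j$, so that $H_{i,j}$ is independent of $j$. Your Case~2 is slightly more explicit than the paper's in that you split on whether ${u}_{hj}^{+}(a_j)$ lies below or above $u_*$, whereas the paper handles both sub-cases in a single line via the inequality $f_{out}({u}_{hj}^{+}(a_j))\ge f(\tilde u)$; this is purely presentational.
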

\begin{proof}
As in the proof of Theorem \ref{theorem_distribution2_in}, we wish to have $H_{i,j}=H_{i,l}$ for all $i,j,l$. And again we achieve this by assuming the inflow term is the minimizer in both numerical fluxes in (\ref{Godunov_flux}), i.e. $H_{i,j}=H_{i,l}=f_{in}({u}_{hi}^{-}(b_i))$. Consult Figure  \ref{Vacek_Figure_Distribution_error}(a) in the following.

\textbf{Case 1.} If ${u}_{hi}^{-}(b_i)\ge u^*$ then $f_{in}({u}_{hi}^{-}(b_i)) = f(u^*)$ and for $H_{i,j}=H_{i,l}=f_{in}(u^*)$ to hold, we must necessarily have also $f_{out}({u}_{hj}^{+}(a_j)) = f(u^*)$, i.e. ${u}_{hj}^{+}(a_j)\leq u_*$ for all $j\in\lbrace n+1,\ldots,n+m\rbrace$.

\textbf{Case 2.} Since ${u}_{hi}^{-}(b_i)< u^*$ and since $f$ is non-increasing on $[u^*,\infty)$, there exists $\tilde{u}>u^*$ such that $f(\tilde{u})=f({u}_{hi}^{-}(b_i))$, cf. Figure \ref{Vacek_Figure_Distribution_error}(a). Then if ${u}_{hj}^{+}(a_j)\leq\tilde{u}$ for all $j\in\lbrace n+1,\ldots,n+m\rbrace$, we have $f_{out}({u}_{hj}^{+}(a_j))\ge f(\tilde{u})= f({u}_{hi}^{-}(b_i)) =f_{in}({u}_{hi}^{-}(b_i))$. Therefore, $H_{i,j}=f_{in}({u}_{hi}^{-}(b_i))$ for all $j\in\lbrace n+1,\ldots,n+m\rbrace$, hence $H_{i,j}=H_{i,l}$ for all $j,l$.
\end{proof}

\begin{figure}[b!]\centering
\subfloat[$\alpha$--outside.]{\label{Vacek_Figure_Distribution_error_a}\includegraphics[height=2.5in]{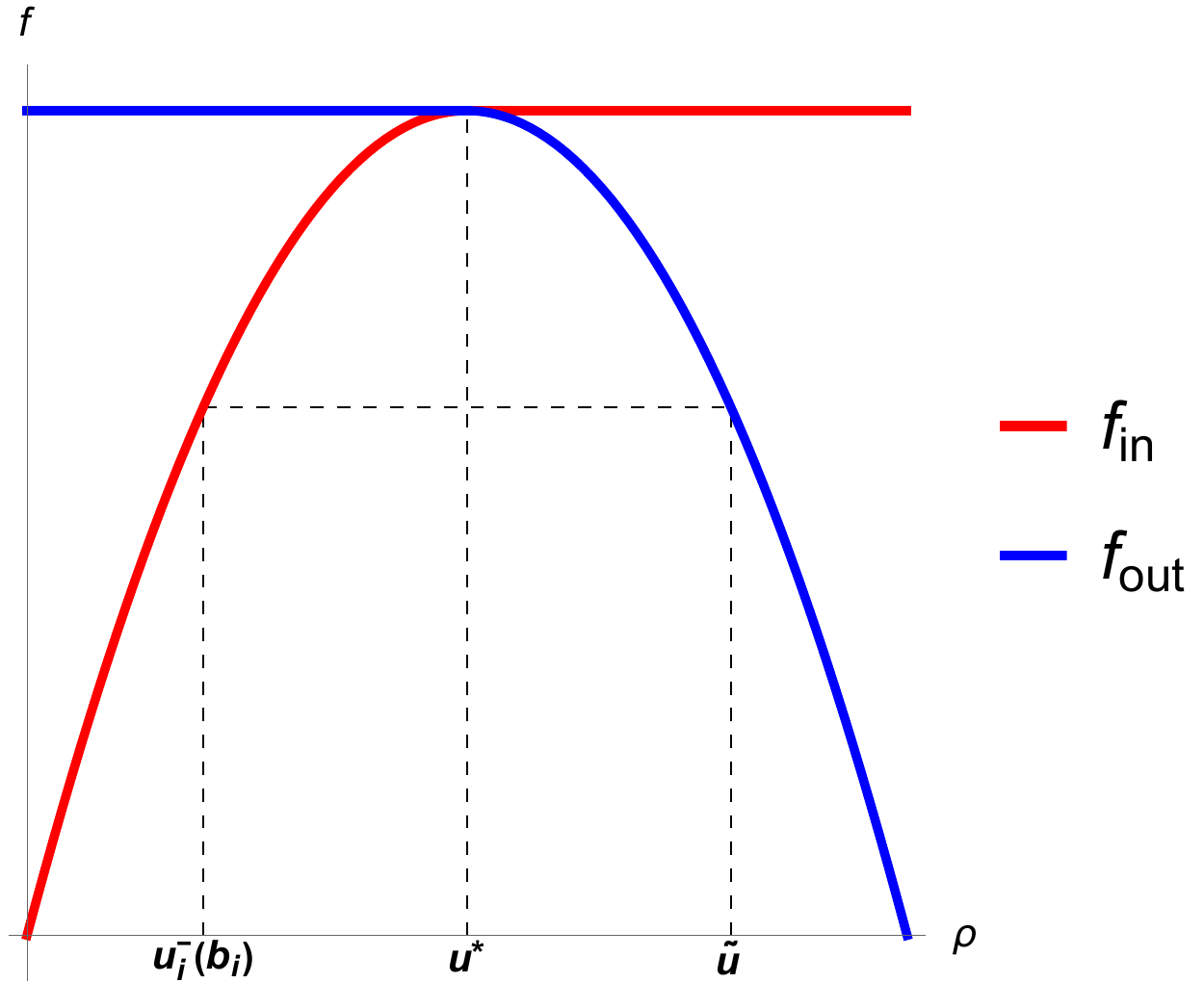}}
\hspace{0pt}
\subfloat[$\alpha$--inside.]{\label{Vacek_Figure_Distribution_error_b}\includegraphics[height=2.5in]{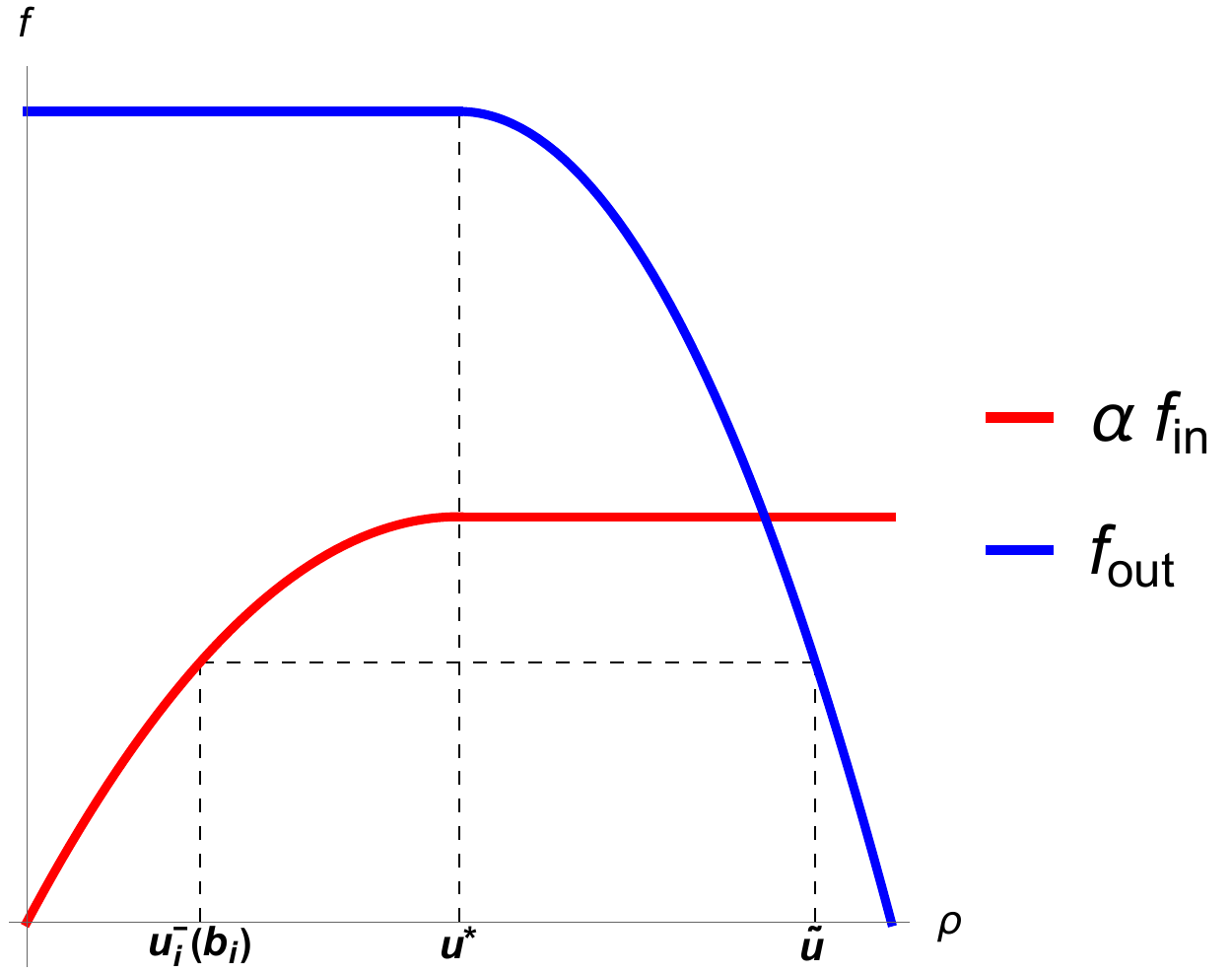}}
\caption{Illustration of Theorems \ref{theorem_distribution2a_out} and \ref{theorem_distribution2a_in}.}
\label{Vacek_Figure_Distribution_error}
\end{figure}

\begin{theorem}[Zero traffic distribution error for $\alpha$--inside]\label{theorem_distribution2a_in}
Assume that for each $i\in\lbrace 1,\ldots,n\rbrace$ one of the following conditions is satisfied:
\begin{enumerate}
\item ${u}_{hi}^{-}(b_i)\ge u^*$ and ${u}_{hj}^{+}(a_j)\leq \tilde{u}_{j}$ for all $j\in\lbrace n+1,\ldots,n+m\rbrace$, where $\tilde{u}_{j}\ge u^*$ is such that $f(\tilde{u}_{j})=\alpha_{j,i}f(u^*)$.
\item  ${u}_{hi}^{-}(b_i)< u^*$ and ${u}_{hj}^{+}(a_j)\leq \tilde{u}_{j}$ for all $j\in\lbrace n+1,\ldots,n+m\rbrace$, where $\tilde{u}_{j}>u^*$ is such that $f(\tilde{u}_{j})=\alpha_{j,i}f({u}_{hi}^{-}(b_i))$.
\end{enumerate}
Then $E_j=0$ for all $j\in\lbrace n+1,\ldots,n+m\rbrace$. 
\end{theorem}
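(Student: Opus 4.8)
The plan is to mirror the proof of Theorem \ref{theorem_distribution2a_out}, now using the three-variable Godunov flux (\ref{Godunov_flux_3}) and the form (\ref{rov_traffic_distribution_error_in}) of the error term. Recall that for the $\alpha$-inside flux the error term vanishes provided $\alpha_{l,i}H_{i,j}=\alpha_{j,i}H_{i,l}$ for all relevant $i,j,l$. As in Theorem \ref{theorem_distribution2_in}, the natural sufficient condition is that for each $i$ the \emph{inflow} term is the minimizer in (\ref{Godunov_flux_3}) for every outgoing road $j$, i.e. $H_{i,j}=\alpha_{j,i}f_{in}({u}_{hi}^{-}(b_i))$; then $\alpha_{l,i}H_{i,j}=\alpha_{l,i}\alpha_{j,i}f_{in}({u}_{hi}^{-}(b_i))=\alpha_{j,i}H_{i,l}$, and summing over $i$ and $l\neq j$ gives $E_j=0$. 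So the whole proof reduces to exhibiting conditions under which $\alpha_{j,i}f_{in}({u}_{hi}^{-}(b_i))\le f_{out}({u}_{hj}^{+}(a_j))$ for all $j$.

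First I would split on the position of ${u}_{hi}^{-}(b_i)$ relative to $u^*$, exactly as in the $\alpha$-outside case, and use the monotonicity of $f$ together with the fact that $f_{out}$ is non-increasing to the left of $u^*$ (where it equals the constant $f(u^*)$) and equals $f$ to the right of $u^*$. In Case 1, ${u}_{hi}^{-}(b_i)\ge u^*$ gives $f_{in}({u}_{hi}^{-}(b_i))=f(u^*)$, so we need $f_{out}({u}_{hj}^{+}(a_j))\ge \alpha_{j,i}f(u^*)$. Since $0\le\alpha_{j,i}\le 1$ and $f(u^*)$ is the global maximum, there exists $\tilde u_j\ge u^*$ with $f(\tilde u_j)=\alpha_{j,i}f(u^*)$ (using $f$ non-increasing on $[u^*,\infty)$ and, if necessary, the endpoint behavior $f(u_{\max})=0$); then ${u}_{hj}^{+}(a_j)\le\tilde u_j$ forces $f_{out}({u}_{hj}^{+}(a_j))\ge f(\tilde u_j)=\alpha_{j,i}f(u^*)=\alpha_{j,i}f_{in}({u}_{hi}^{-}(b_i))$, which is the inequality we want. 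In Case 2, ${u}_{hi}^{-}(b_i)<u^*$ gives $f_{in}({u}_{hi}^{-}(b_i))=f({u}_{hi}^{-}(b_i))$, and the identical argument with $f(\tilde u_j)=\alpha_{j,i}f({u}_{hi}^{-}(b_i))$, $\tilde u_j>u^*$, yields $f_{out}({u}_{hj}^{+}(a_j))\ge\alpha_{j,i}f_{in}({u}_{hi}^{-}(b_i))$. In both cases $H_{i,j}=\alpha_{j,i}f_{in}({u}_{hi}^{-}(b_i))$ for every $j$, so $\alpha_{l,i}H_{i,j}=\alpha_{j,i}H_{i,l}$, and plugging into (\ref{rov_traffic_distribution_error_in}) gives $E_j=0$. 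Figure \ref{Vacek_Figure_Distribution_error}(b) illustrates the geometry.

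The only genuinely delicate point — and the step I expect to be the main obstacle — is the existence and well-definedness of the thresholds $\tilde u_j$, since now they depend on $i$ as well as $j$ through $\alpha_{j,i}$. One must check that $\alpha_{j,i}f(u^*)$ (resp. $\alpha_{j,i}f({u}_{hi}^{-}(b_i))$) lies in the range of $f$ restricted to $[u^*,\infty)$: this uses $f$ non-increasing there, $f(u^*)$ the maximum, and the fact that $f$ attains arbitrarily small nonnegative values as $u\to\infty$ (or $f(u_{\max})=0$ for traffic fluxes), so by the intermediate value theorem such $\tilde u_j$ exists; it need not be unique, but any choice works. A secondary subtlety is that the hypothesis is stated ``for each $i$, one of the conditions holds,'' so the threshold used for a given outgoing road $j$ may differ depending on which incoming road $i$ we are examining; since the error term (\ref{rov_traffic_distribution_error_in}) is summed over $i$ and each summand $\alpha_{l,i}H_{i,j}-\alpha_{j,i}H_{i,l}$ vanishes individually once $H_{i,j}=\alpha_{j,i}f_{in}({u}_{hi}^{-}(b_i))$ for that particular $i$, this causes no difficulty — each $i$ is handled independently. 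Everything else is a routine case check identical in spirit to Theorem \ref{theorem_distribution2a_out}.
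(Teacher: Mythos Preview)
Your proposal is correct and follows essentially the same approach as the paper: reduce to showing $\alpha_{l,i}H_{i,j}=\alpha_{j,i}H_{i,l}$ by verifying that the inflow term $\alpha_{j,i}f_{in}({u}_{hi}^{-}(b_i))$ is the minimizer in (\ref{Godunov_flux_3}) for every outgoing road, then split on whether ${u}_{hi}^{-}(b_i)\ge u^*$ or ${u}_{hi}^{-}(b_i)<u^*$ and use the monotonicity of $f$ on $[u^*,\infty)$ together with the defining property of $\tilde u_j$. Your additional remarks on the existence of $\tilde u_j$ and on handling each $i$ independently are extra commentary rather than a different argument; the paper simply takes $\tilde u_j$ as given by the hypothesis and proceeds directly.
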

\begin{proof}
Due to Lemma \ref{theorem_distribution_in}, we wish to have $\alpha_{l,i}H_{i,j}=\alpha_{j,i}H_{i,l}$ for all $i,j,l$. And again we achieve this by assuming the inflow term is the minimizer in both numerical fluxes in the Godunov flux (\ref{Godunov_flux_3}), since then we will have $\alpha_{l,i}H_{i,j}=\alpha_{j,i}H_{i,l}=\alpha_{j,i}\alpha_{l,i}f_{in}({u}_{hi}^{-}(b_i))$. Consult Figure  \ref{Vacek_Figure_Distribution_error}(b) in the following.

\textbf{Case 1.} If ${u}_{hi}^{-}(b_i)\ge u^*$ then $f_{in}({u}_{hi}^{-}(b_i)) = f(u^*)$ and $f_{out}({u}_{hj}^{+}(a_j)) \ge f(\tilde{u}_{j})=\alpha_{j,i}f(u^*)$. Hence $H_{i,j}=\alpha_{j,i}f(u^*)$ and similarly $H_{i,l}=\alpha_{l,i}f(u^*)$. Therefore $\alpha_{l,i}H_{i,j}=\alpha_{j,i}H_{i,l}$.

\textbf{Case 2.}  If ${u}_{hi}^{-}(b_i)< u^*$ then $f_{in}({u}_{hi}^{-}(b_i)) = f({u}_{hi}^{-}(b_i))$ and $f_{out}({u}_{hj}^{+}(a_j)) \ge f(\tilde{u}_{j})=\alpha_{j,i}f({u}_{hi}^{-}(b_i))$. Hence $H_{i,j}=\alpha_{j,i}f({u}_{hi}^{-}(b_i))$ and similarly $H_{i,l}=\alpha_{l,i}f({u}_{hi}^{-}(b_i))$. Therefore $\alpha_{l,i}H_{i,j}=\alpha_{j,i}H_{i,l}$.
\end{proof}

We note that Theorems \ref{theorem_distribution2a_out} and \ref{theorem_distribution2a_in} are more general in that they allow larger densities on the outgoing roads than Theorem \ref{theorem_distribution2_in}, while still satisfying the traffic distribution condition (\ref{traffic_distribution_exact}) exactly. Theorem \ref{theorem_distribution2_in} requires  ${u}_{hj}^{+}(a_j)\leq u_*$ on all outgoing roads. However, Theorems \ref{theorem_distribution2a_out} and \ref{theorem_distribution2a_in} allow a weaker condition ${u}_{hj}^{+}(a_j)\leq \tilde{u}_{j}$ for some $\tilde{u}_{j}>u^*$ provided that ${u}_{hi}^{-}(b_i)< u^*$, i.e. if the incoming roads are not too full. Moreover, the value of this allowed density $\tilde{u}_{j}$ is higher for the $\alpha$--inside Godunov than for $\alpha$--outside, as can be seen when comparing the values of $\tilde{u}$ in Figures \ref{Vacek_Figure_Distribution_error}(a) and \ref{Vacek_Figure_Distribution_error}(b). Thus the $\alpha$--inside numerical flux allows for a larger traffic flow through the junction, while still satisfying the traffic distribution condition (\ref{traffic_distribution_exact}).

\section{Numerical results}\label{sec_results}
Here we present numerical experiments comparing the three Godunov-like numerical fluxes considered in this paper. As for the implementation, integrals over individual elements in (\ref{DG_Weak}) are evaluated using Gaussian quadrature rules. Basis functions of the space $S_h$ are taken as Legendre polynomials on individual elements, where the support of each basis function is a single element. By writing $u_h$ in terms of basis functions in space and setting the test function $\varphi$ to elements of the basis, equation (\ref{DG_Weak}) reduces to a system of ordinary differential equations which is solved by Adams–Bashforth methods. 

The DG method is much less susceptible to the Gibbs phenomenon than the finite element method, however spurious oscillations can still occur locally in the vicinity of discontinuities or steep gradients in the solution. There are several approaches how to treat these local oscillations, e.g. adding local artificial diffusion. In our case, we apply limiters to the DG solution. In our implementation, we use the \emph{modified minmod limiter} from  \cite{Stabilization_original}, cf. also \cite{Stabilization}.

Often the solution of (\ref{DG_Hyperbolic_problem}) is a physical quantity which satisfies some admissibility conditions, e.g. the physical density must be positive. If we obtain a solution which is not in the admissible interval, e.g. due to overshoots or undershoots, the problem can become ill-posed or even undefined. This is our case, since the traffic density ${u}$ must naturally satisfy ${u}\in[0,{u}_{\max}]$. The DG method by itself does not guaranty such bounds are satisfied for the discrete solution. Limiters usually prevent this from happening, however in traffic flows, it is natural that entire regions of the computational domain have ${u}=0$ or ${u}={u}_{\max}$ and it is easy for the algorithm to produce e.g. negative density due to round-off errors. To prevent this from happening, we use the following procedure. If the average density on an element $K$ is in the admissible interval, we decrease the slope of our solution so that the modified density lies in $[{u}_{\min},{u}_{\max}]$ similarly as in the limiting procedure. The important property is that the element average does not change after the application of the limiter. As further insurance, if the average density on an element $K$ is not in the admissible interval $[0,{u}_{\max}]$, then we change the solution such that ${u}\equiv 0$ or ${u}\equiv{u}_{\max}$ on the whole element $K$. The latter case, when the average density on an element is not in the admissible interval is extremely rare and, for us, serves as an indicator that the time step is too large or the mesh is too coarse. Since in this case the described procedure does not conserve the total number of vehicles, we rather decrease the time step or increase the number of elements. For polynomials of higher degree, we can use the method described in the paper \cite{High_order_limiters} by Zhang, Xia and Shu, which reduces to the procedure described above in the simplest case of piecewise linear approximations in 1D.

We consider a simple network with one incoming road (Road 1, colored red in Figures \ref{Vacek_Figure_Numerical_experiment_1} and \ref{Vacek_Figure_Numerical_experiment_2}) and two outgoing roads, Road 2 (green) and Road 3 (blue). The network will be closed at their endpoints ($a_1$, $b_2$ and $b_3$) meaning that the inflow density at $a_1$ is equal to zero and the outflow densities at $b_2$ and $b_3$ are maximal (i.e. equal to 1). Thus, we can check the total number of cars, because we have neither inflow nor outflow into the network. We choose $\alpha_{2,1}=0.75$ and $\alpha_{3,1}=0.25$. The length of all roads is $1$. We use the combination of the explicit Euler method (step size $\tau=10^{-4}$) and DG method (number of elements $N=150$ on each road). We calculate the piecewise linear approximations of solutions and we use two Gaussian quadrature points in each element. We use Greenshields model with $v_{\max}=1$ and ${\rho}_{\max}=1$.

In the following section, we set the initial condition to see the differences between approaches. We choose congested examples to demonstrate the distribution error from Lemma \ref{theorem_distribution_in}. In non--congested cases, the traffic distribution error is zero.

\subsection{$\alpha$-outside vs. $\alpha$-inside Godunov flux}
First, we compare the $\alpha$-outside and $\alpha$-inside Godunov fluxes. This example shows that the traffic flow from the incoming road in the $\alpha$-outside case isn't as high as in the $\alpha$-inside case. We also expect a distribution error in $\alpha$-outside, which corresponds to Lemma \ref{theorem_distribution_out}. We use the initial conditions
\begin{align*}
{u}_{0,1}(x)=
\begin{cases}
0.5,\\
0.5,\\
\end{cases}
\quad
{u}_{0,2}(x)=
\begin{cases}
0.75,\\
0,\\
\end{cases}
\quad
{u}_{0,3}(x)=
\begin{cases}
0.25,& x\in[1,1.5],\\
0,& x\in(1.5,2],\\
\end{cases}
\end{align*}
cf. Fig. \ref{Vacek_Figure_Numerical_experiment_1_a}. The total amount of vehicles is $0.5$ on Road 1. These cars are distributed into Road 2 (it has $0.375$ cars already) and Road 3 (it has $0.125$ cars already) by the distribution coefficients. At the end, we can expect $0.75$ cars on Road 2 and $0.25$ cars on Road 3. 

\begin{figure}[t!]\centering
\subfloat[$t=0$.]{\label{Vacek_Figure_Numerical_experiment_1_a}\includegraphics[width=4.3in]{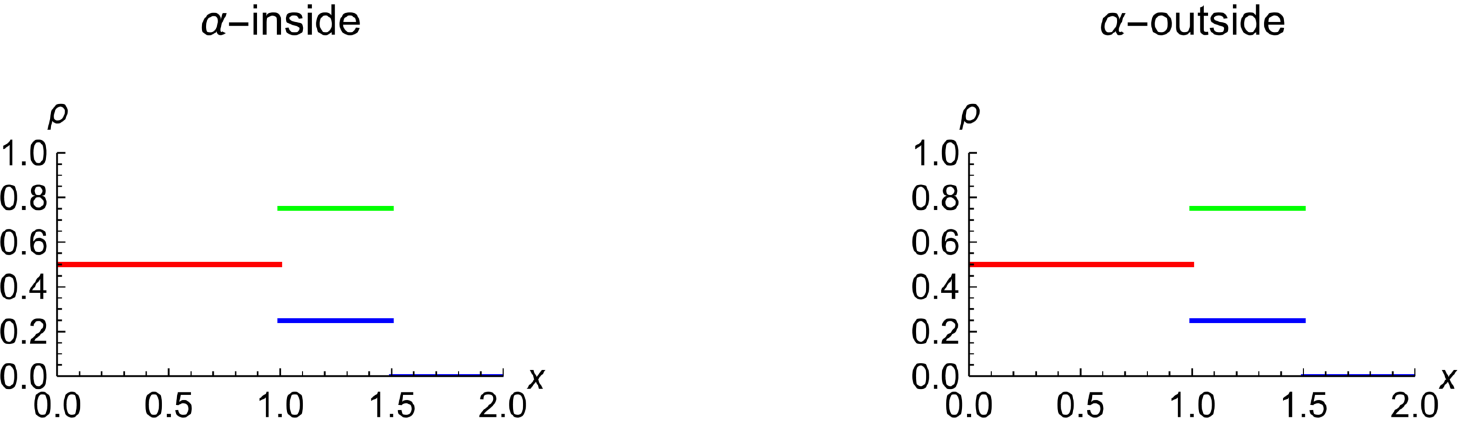}}
\hspace{0pt}
\subfloat[$t=0.6$.]{\label{Vacek_Figure_Numerical_experiment_1_b}\includegraphics[width=4.3in]{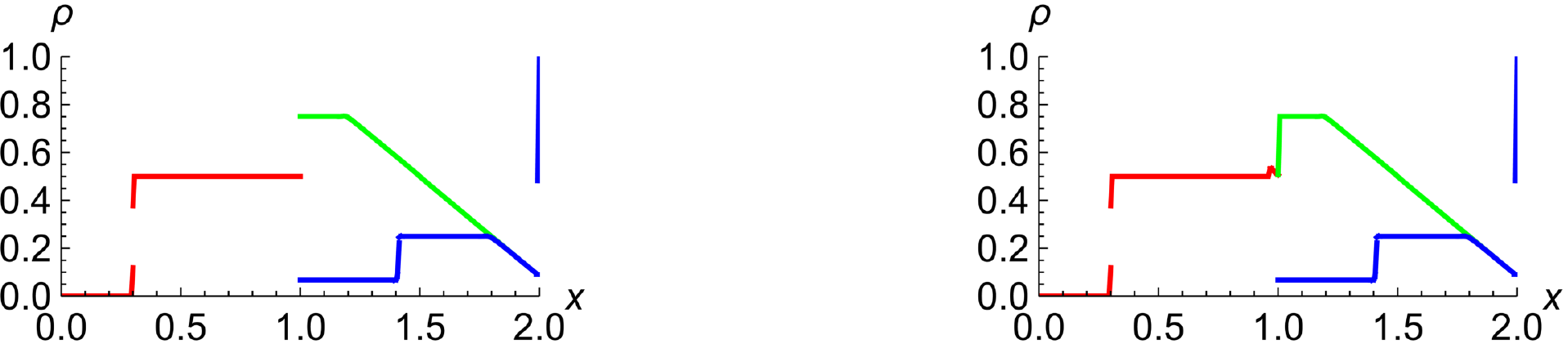}}
\hspace{0pt}
\subfloat[$t=1.2$.]{\label{Vacek_Figure_Numerical_experiment_1_c}\includegraphics[width=4.3in]{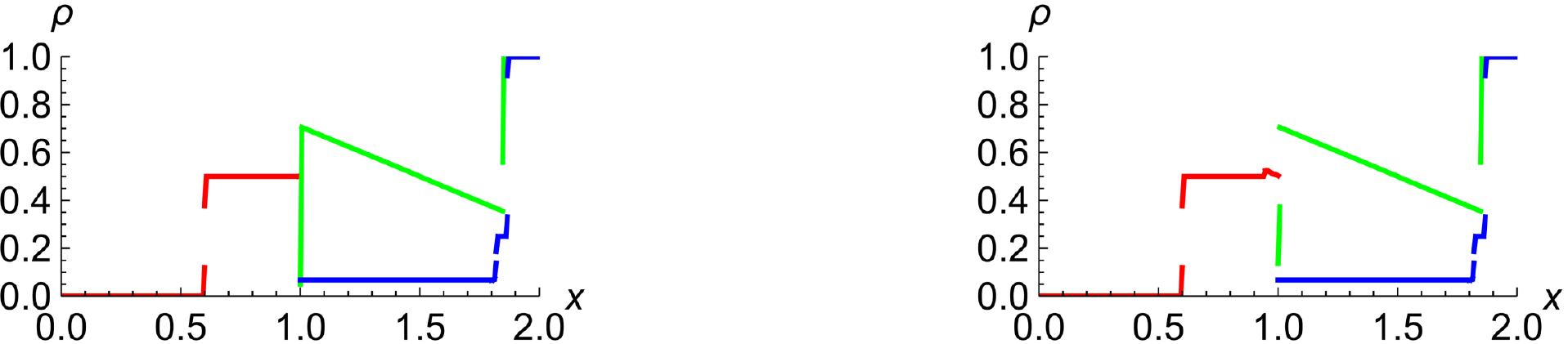}}
\hspace{0pt}
\subfloat[$t=1.8$.]{\label{Vacek_Figure_Numerical_experiment_1_d}\includegraphics[width=4.3in]{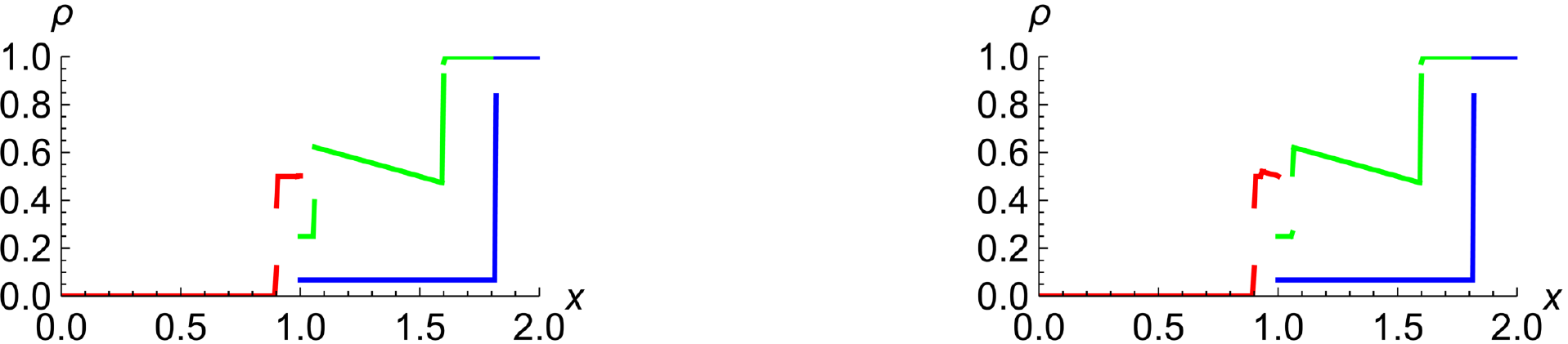}}
\hspace{0pt}
\subfloat[$t=2.4$.]{\label{Vacek_Figure_Numerical_experiment_1_e}\includegraphics[width=4.3in]{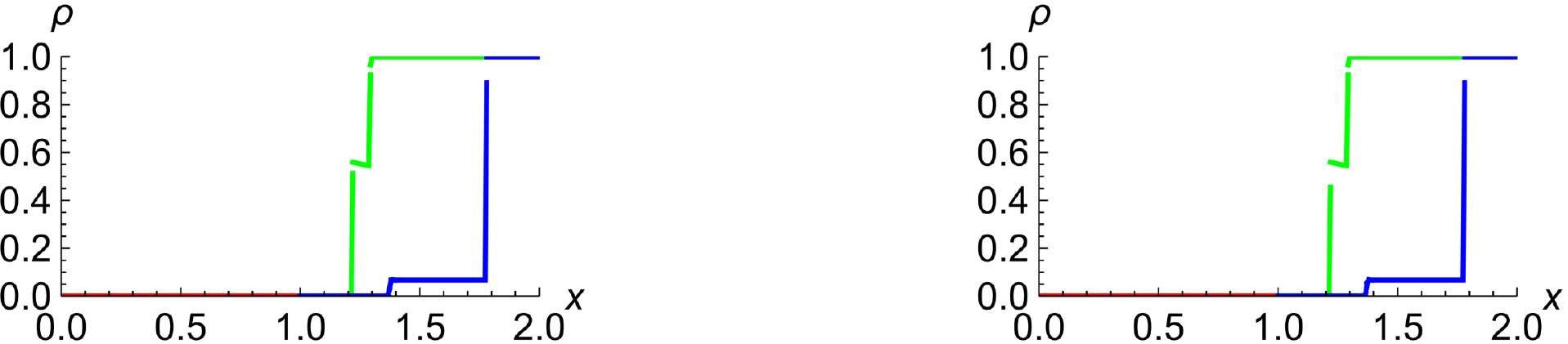}}
\hspace{0pt}
\subfloat[$t=3$.]{\label{Vacek_Figure_Numerical_experiment_1_f}\includegraphics[width=4.3in]{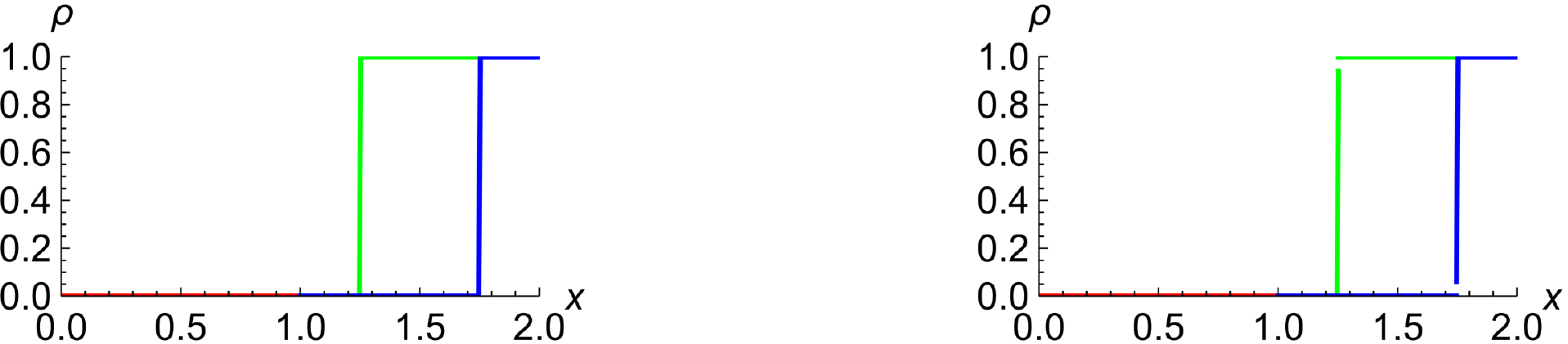}}
\caption{Comparison of $\alpha$-inside (left) and $\alpha$-outside (right) Godunov flux on network with \textcolor{red}{Road 1}, \textcolor{green}{Road 2} and \textcolor{blue}{Road 3}.}
\label{Vacek_Figure_Numerical_experiment_1}
\end{figure}

We can see the results in Fig. \ref{Vacek_Figure_Numerical_experiment_1}. We can observe, that the $\alpha$-outside Godunov numerical flux creates a traffic congestion at the end of Road 1, see Figure \ref{Vacek_Figure_Numerical_experiment_1_b} and \ref{Vacek_Figure_Numerical_experiment_1_c}. This effect is very subtle, but important -- in Figures \ref{Vacek_Figure_Numerical_experiment_1_b} and \ref{Vacek_Figure_Numerical_experiment_1_c}, for the $\alpha$-outside flux there is a slight increase in density at the end of Road 1 immediately before the bifurcation to Roads 2 and 3. The key point is that the density there locally rises above $u^*$, which is, in a certain sense, the definition of a congestion. This effect does not occur for $\alpha$-inside (the density remains beneath $u^*$) and is artificial in the $\alpha$-outside case, as this congestion should not occur -- Roads 2 and 3 are able to take in even the larger inflow which the $\alpha$-inside Godunov flux prescribes there. However, in this case the $\alpha$-outside gives a smaller inflow to Roads 2 and 3, leading to the congestion at the end of Road 1. This is due to the position of the distribution coefficient and evaluation of the minimum in the calculation of the numerical flux between Road 1 and Road 2. Because $f_{out}^{(2)}<f_{in}^{(1)}$ and $\alpha_{2,1} f_{in}^{(1)}<f_{out}^{(2)}$, the $\alpha$-inside case takes $\alpha_{2,1} f_{in}^{(1)}$ as the minimizer in the flux definition, whereas the $\alpha$-outside case takes $f_{out}^{(2)}$ and multiplies it by $\alpha_{2,1}$. Thus, $\alpha$-outside has lower inflow from Road 1 and makes a congestion. Once the traffic density on Road 2 decreases, the inflow is same in both cases, cf. Figure \ref{Vacek_Figure_Numerical_experiment_1_d} and \ref{Vacek_Figure_Numerical_experiment_1_e}.

The final results are in Fig. \ref{Vacek_Figure_Numerical_experiment_1_f}. The numerical flux with $\alpha$--inside has $0.75$ cars on Road 2 and $0.25$ on Road 3, i.e. there is no distribution error and traffic is distributed exactly according to the drivers' preferences. The numerical flux with $\alpha$--outside gives approx. $0.7498$ cars on Road 2 and $0.2502$ cars on Road 3. In this case, we have a small distribution error which is caused by a traffic congestion between Road 1 and Road 2, and therefore some drivers prefer Road 3 instead of their original preference of Road 2. Note that this congestion is caused by the choice of the $\alpha$--outside Godunov flux, not by the traffic situation in itself.

\subsection{$\alpha$-inside Godunov flux vs. Maximum possible traffic flow}
Second, we compare the $\alpha$-inside Godunov flux and Maximum possible traffic flow flux. We use initial conditions
\begin{align*}
{u}_{0,1}(x)=
\begin{cases}
0,\\
1,\\
\end{cases}
\quad
{u}_{0,2}(x)=
\begin{cases}
1,\\
0,\\
\end{cases}
\quad
{u}_{0,3}(x)=
\begin{cases}
0,&\qquad x\in[0,0.5],\\
0,&\qquad x\in(0.5,1],\\
\end{cases}
\end{align*}
cf. Fig. \ref{Vacek_Figure_Numerical_experiment_2_a}. The total amount of vehicles is $0.5$ cars on Road 1. These cars are distributed into Road 2 (it has $0.5$ cars already) and Road 3 according to the distribution coefficients. At the end, we can expect $0.875$ cars on Road 2 and $0.125$ cars on Road 3.

\begin{figure}[t!]\centering
\subfloat[$t=0$.]{\label{Vacek_Figure_Numerical_experiment_2_a}\includegraphics[width=4.3in]{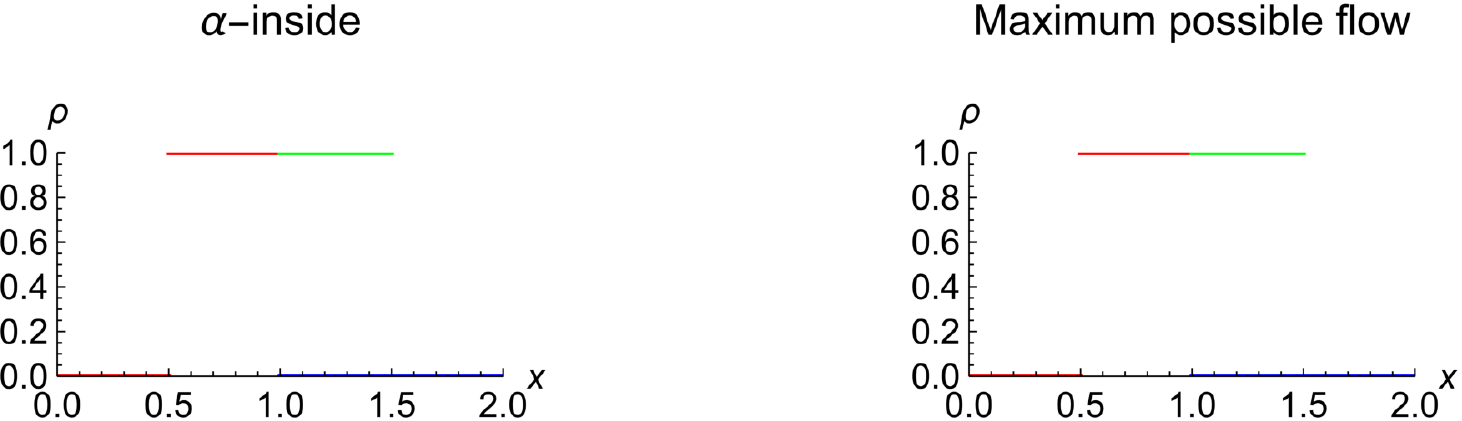}}
\hspace{0pt}
\subfloat[$t=0.25$.]{\label{Vacek_Figure_Numerical_experiment_2_b}\includegraphics[width=4.3in]{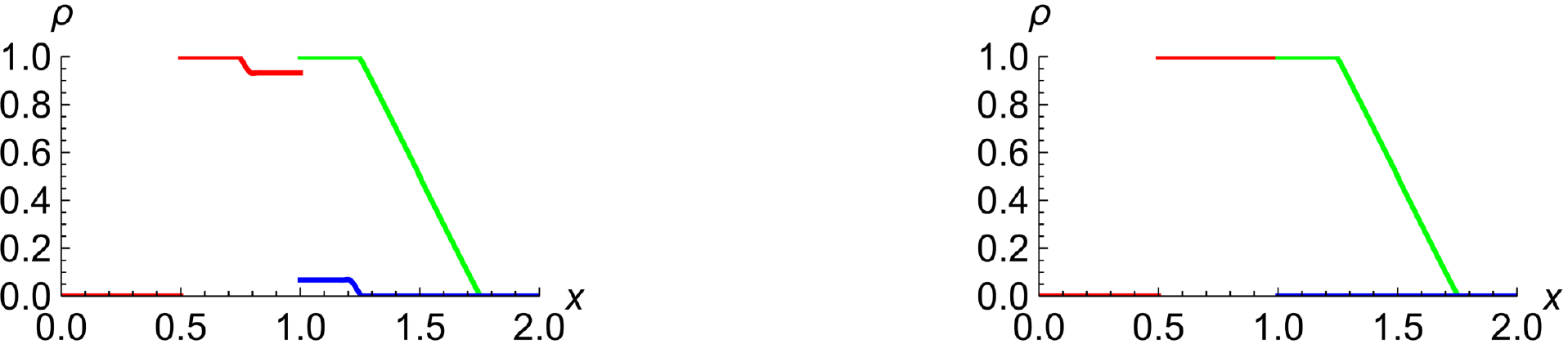}}
\hspace{0pt}
\subfloat[$t=0.5$.]{\label{Vacek_Figure_Numerical_experiment_2_c}\includegraphics[width=4.3in]{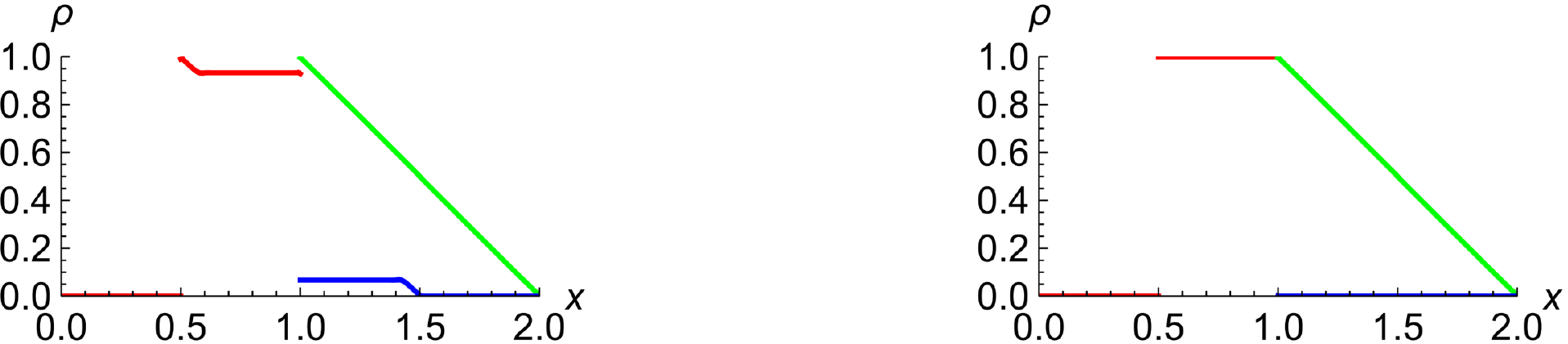}}
\hspace{0pt}
\subfloat[$t=1.25$.]{\label{Vacek_Figure_Numerical_experiment_2_d}\includegraphics[width=4.3in]{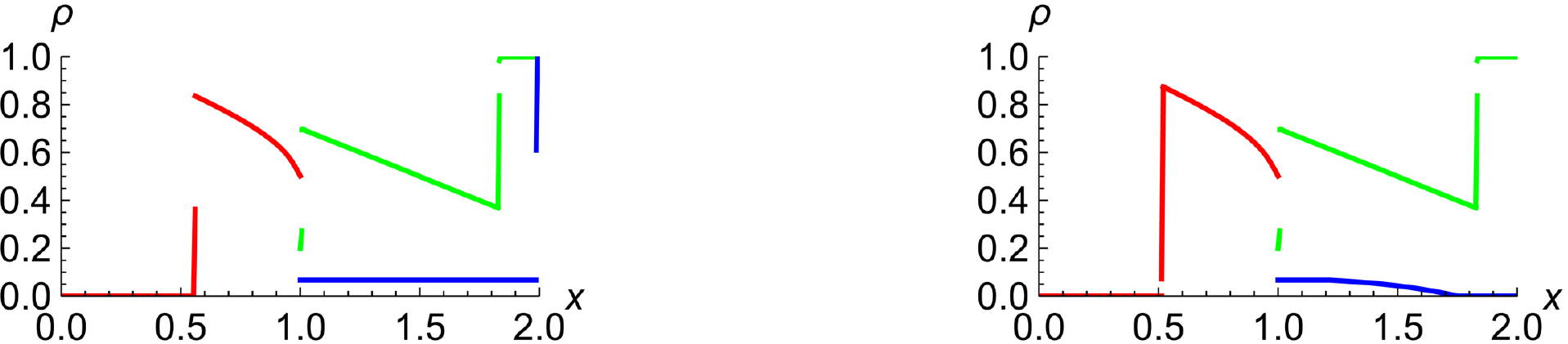}}
\hspace{0pt}
\subfloat[$t=2.5$.]{\label{Vacek_Figure_Numerical_experiment_2_e}\includegraphics[width=4.3in]{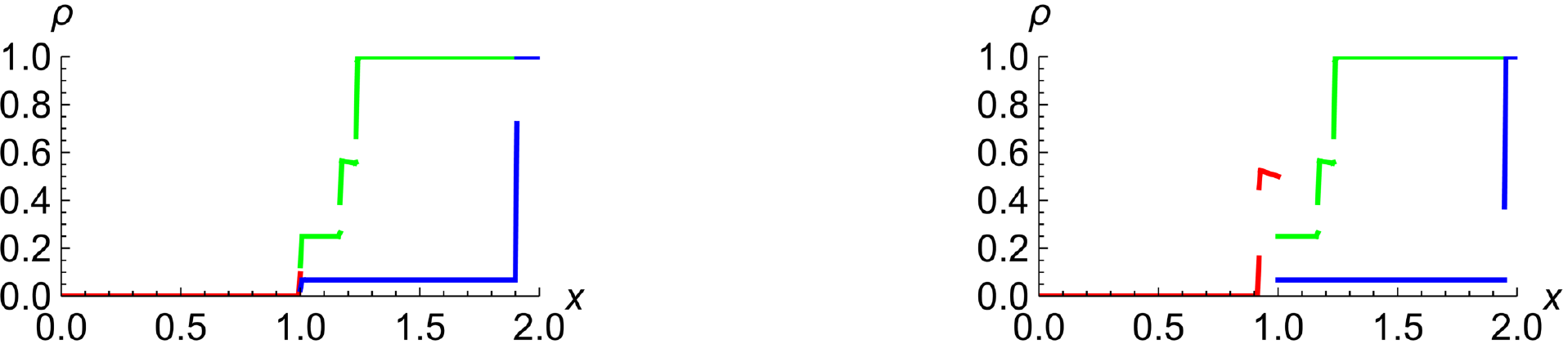}}
\hspace{0pt}
\subfloat[$t=4$.]{\label{Vacek_Figure_Numerical_experiment_2_f}\includegraphics[width=4.3in]{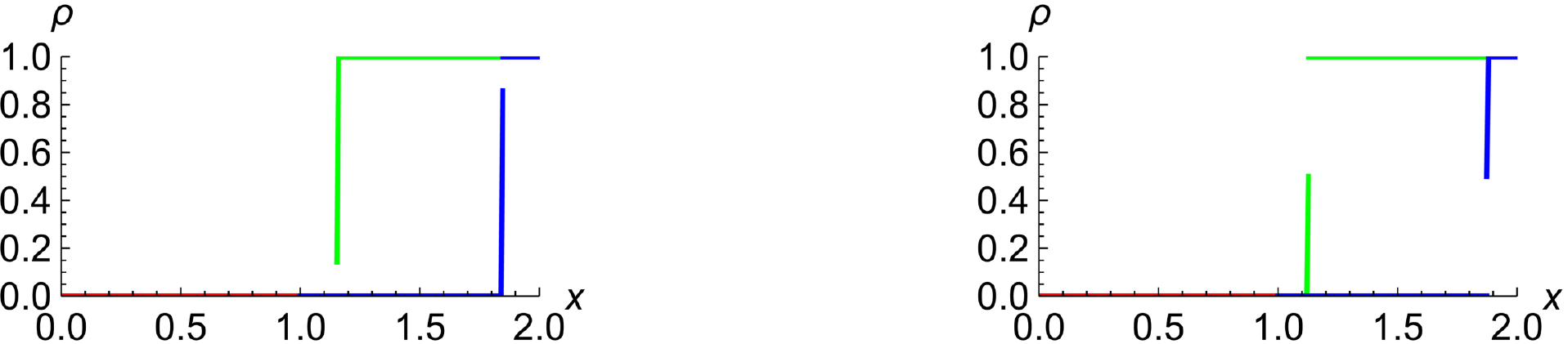}}
\caption{Comparison of $\alpha$-inside Godunov flux (left) and Maximum possible traffic flow (right) on network with \textcolor{red}{Road 1}, \textcolor{green}{Road 2} and \textcolor{blue}{Road 3}.}
\label{Vacek_Figure_Numerical_experiment_2}
\end{figure}

We can see the results in Fig. \ref{Vacek_Figure_Numerical_experiment_2}. If we compare the flow through the junction in Fig. \ref{Vacek_Figure_Numerical_experiment_2_b} and \ref{Vacek_Figure_Numerical_experiment_2_c}, we can see that our numerical flux allows flow between Road 1 and Road 3 while the Maximum possible flow doesn't. From the time $t=0.5$, the congestion on Road 2 decreases and the traffic flows from Road 1 to Road 2 and 3. The inflow from Road 1 in the case with maximum possible flow is still lower than in case with our numerical flux. In Figure \ref{Vacek_Figure_Numerical_experiment_2_d}, the inflow is the same in both cases. At $t=2.5$, Road 1 is almost empty in the case of our Godunov flux (approximately $0.0003$ cars), whereas in the case of Maximum possible flow there are still approx. $0.0414$ cars, cf. Figure \ref{Vacek_Figure_Numerical_experiment_2_e}. It is obvious, that the movement of all cars finished earlier in the case of the Godunov flux.

The final results are in Fig. \ref{Vacek_Figure_Numerical_experiment_2_f}. The maximum possible traffic flow has $0.875$ cars on Road 2 and $0.125$ on Road 3, i.e. there is no distribution error. The Godunov numerical flux with $\alpha$--inside has approx. $0.8438$ cars on Road 2 and approx. $0.1562$ cars on Road 3. In this case, we have a non--zero distribution error which is caused by a traffic jam on Road 2, and therefore some drivers prefer rather Road 3.

We note that for both of the compared fluxes, the solution on Road 2 (green) is identical in Figures \ref{Vacek_Figure_Numerical_experiment_2_a}--\ref{Vacek_Figure_Numerical_experiment_2_d}. The difference is in the distribution of cars on Road 1 and Road 3 (blue), where the maximum possible flux gives a zero flow due to the traffic jam, unlike the $\alpha$-inside flux which allows a small number of cars to enter Road 3 due to the traffic distribution error. The results on Road 2 start to differ in Figure \ref{Vacek_Figure_Numerical_experiment_2_e}, since then all cars are evacuated from Road 1 (here the nonzero flux to Road 3 helped `drain' Road 1 more quickly) in the $\alpha$-inside Godunov case, unlike the maximum possible flux where there are still some cars left on Road 1 supplying an inflow to Roads 2 and 3.

As can be seen from these two examples, the effect of the traffic distribution errors is rather subtle, but in our view leads to more realistic results, where human drivers tend to adapt to current traffic situations and change their original preferences in the presence of traffic jams, e.g. by taking alternative routes. In the approach used in the maximum possible traffic flow, the drivers strictly adhere to their original preferences under all circumstances, even in extreme situations when one of the outgoing roads is completely jammed and the other is empty. We propose two interpretations of these phenomena. The first one is that the more flexible Godunov flux describes human drivers which adapt to current situations, while the Maximum flow describes e.g. a fleet of communicating autonomous vehicles, which optimize (maximize) the total flow through the junction, while strictly adhering to the predetermined routes. On the other hand, a typical human driver does not care about maximizing the total flux through the junction, he simply wants to get through the junction to his desired outgoing road and does not really care what happens on the other roads. 

Another possible interpretation is the presence of dedicated turning lanes in front of the junction (Godunov) and their absence (maximum flow).  If dedicated turning lanes are not present, a traffic jam on one of the outgoing roads causes a congestion in the whole junction, as cars which want to go to another possibly empty road cannot do so, since they cannot overtake the standing vehicles. It is the presence of turning lanes that allows these cars to pass the other standing cars, resulting in a nonzero flow through the junction and a small violation of the predetermined traffic distribution coefficients. Once again we remind that if the roads are sufficiently free (in the sense of Section \ref{Section_Distribution_Error}), the drivers strictly adhere to their original preferences even in the case of the Godunov flux. It is only in the presence of congestions that the flexibility of the Godunov approach manifests itself (in the form of the traffic distribution error).

\section{Conclusion}
In this paper, we dealt with the construction and analysis of two new numerical fluxes for traffic flows on networks. We use the discontinuous Galerkin method to discretize the governing equations in the form of first order nonlinear hyperbolic conservation laws describing the traffic flow on individual roads. The main contribution is the construction of two new numerical fluxes at the network junctions that are a generalization of the Godunov numerical flux. The construction is an extension of our previous work \cite{Prvni_clanek} which was based on a generic numerical flux, rather than Godunov. We prove basic properties of the two newly proposed Godunov-like fluxes, namely the conservativity of the resulting numerical method (via discrete Rankine-Hugoniot conditions) and analyze situations when the predetermined drivers' preferences are satisfied or possibly violated. Specifically, the drivers' preferences at junctions are given by traffic distribution coefficients. We show that if the junction is not congested, the traffic flows according to these preferences. Once the junction becomes congested, there can be a small traffic distribution error which we interpret either as factoring of human behavior into the model or the existence of dedicated turning lanes in front of the junction, as opposed to single-lane roads. We demonstrate these phenomena numerically and compare with the approach to the construction of numerical fluxes taken in \cite{RKDG}. One of the advantages of our Godunov-like numerical fluxes is the simplicity of their explicit construction for all types of junctions, unlike the approach of \cite{RKDG} and \cite{Networks}, which requires the solution of a linear programming problem. In subsequent papers we will prove an entropy inequality for the scheme along with $L^2$ stability and error estimates and analyze the behavior of limiters at junctions.

\bibliographystyle{spmpsci}      
\bibliography{Vacek_bibliography}   

\end{document}